\def\Z{\mathbb Z}
\def\N{\mathbb N}
\def\Q{\mathbb Q}
\def\K{\mathbb K}
\def\O{{\mathcal O}_K}
\def\P{{\mathcal P}_K}
\def\U{\mathbb U}
\def\V{\mathbb V}
\def\X{\mathbb X}
\def\B{\mathbb B}
\def\a{\alpha}
\def\b{\beta}
\def\val{v_{\pi}}
\def\sig{\sigma}
\def\tsig{\tilde{{\sigma}}}
\def\Qp{\mathbb{Q}_{p}}
\def\Z{\mathbb{Z}}
\def\N{\mathbb{N}}
\def\Q{\mathbb{Q}}
\def\K{\mathbb K}
\def\O{{\mathcal O}_K}
\def\U{\mathbb U}
\def\V{\mathbb V}
\def\X{\mathbb X}
\def\a{\alpha}
\def\b{\beta}
\def\val{v_{\pi}}
\def\sig{\sigma}
\def\tsig{\tilde{{\sigma}}}
\theoremstyle{plain}
  \newtheorem{thm}{Theorem}[section]
  \newtheorem{prop}[thm]{Proposition}
  \newtheorem{lem}[thm]{Lemma}
  \newtheorem{cor}[thm]{Corollary}
  \newtheorem*{cth*}{Theorem}
  \newtheorem*{clm*}{Lemma}
\theoremstyle{definition}
  \newtheorem*{defn*}{Definition}
\theoremstyle{remark}
  \newtheorem{rem}{Remark}
\begin{document}
\title[]{Dynamics of convergent power series on the integral ring of a finite extension of $\mathbb{Q}_p$ }
\date{\today}
\author{Shilei Fan}
\address{School of Mathematics and Statistics, Central China Normal University, 430079, Wuhan, China}
\email{slfan@mail.ccnu.edu.cn}

\author{Lingmin Liao}
\address{LAMA, UMR 8050, CNRS,
Universit\'e Paris-Est Cr\'eteil, 61 Avenue du
G\'en\'eral de Gaulle, 94010 Cr\'eteil Cedex, France}
\email{lingmin.liao@u-pec.fr}

\begin{abstract}
Let $K$ be a finite extension of the field $\mathbb{Q}_p$ of $p$-adic numbers
 and $\O$ be its integral ring.  The convergent power series with coefficients in $\O$ are studied  as dynamical
systems on $\O$.  A minimal decomposition theorem  for such a dynamical system is obtained. It is proved that there are uncountably many minimal subsystems, provided that there is a
 minimal set consisting of infinitely many points.
 In particular, the complete detailed minimal decompositions of all affine systems are derived.
 \end{abstract}
\subjclass[2010]{Primary 37P10; Secondary 11S82, 37B05}
\keywords{$p$-adic
dynamical system, minimal decomposition, finite extension}
\maketitle
\section{Introduction}
This paper contributes to the theory of $p$-adic dynamical systems which has recently been intensively developed.
We refer to Silverman's book \cite{SilvermanGTM241} and Anashin and Khrennikov's book \cite{AK09AAD} for this development.

Let $p$ be a prime number. Let $\mathbb{Q}_p$ be the field of $p$-adic numbers and $|\cdot|_p$ be the $p$-adic absolute value on $\mathbb{Q}_p$.
Consider a finite extension $K$ of $\mathbb{Q}_p$ with $d=[K:\Q_{p}]$  being the degree of the extension.
The extended absolute value of $K$ is still denoted by $|\cdot|_p$.
Let $\mathcal{O}_K:=\{x\in K : |x|_p \leq 1\} $ be the local ring of
$K$.  Define by
$$\mathcal{O}_K\langle x\rangle:=\left\{\sum_{i\geq 0}a_{i}x^i\in \mathcal{O}_K[[x]]: \lim_{i\rightarrow \infty}a_{i}=0  \right \}$$
 the class of convergent power series with coefficients in $\mathcal{O}_K$. Remark that when $K=\mathbb{Q}_p$, the class $\mathcal{O}_K\langle x\rangle$ was called the class $\mathcal{C}$  by Anashin in \cite{Ana02}.
 A power series $\phi\in \mathcal {O}_K\langle x\rangle$ is considered  as a topological dynamical system on $\mathcal {O}_K$, denoted as $(\mathcal {O}_K, \phi)$.

Let $X$ be a compact metric space and $T$ be a continuous map from $X$ to itself.
Denote by $T^{\circ n}$ the $n$th  iterate of $T$, i.e., $$T^{\circ n}=\underbrace{T\circ T\circ\cdots \circ T}_{n \mbox{~times}}.$$
For $x\in X$, the forward orbit of $x$ is the set
$\{T^{\circ n}(x):n\geq 0\}$.
The system $(X,T)$ is said to be \emph{minimal}  if each orbit is dense in $X$.

In the literature, the minimality, or equivalently the
ergodicity with respect to the Haar measure, of polynomial
dynamical systems on the ring $\mathbb{Z}_p$ of $p$-adic integers
(which is the local ring of $\mathbb{Q}_p$) are extensively studied (\cite{Ana94,Ana02, Ana06, AKY11,CFF09,DP09,FLYZ07,FL11, GKL01, Jeong2013,Larin02}).
See also the monographs \cite{AK09AAD,KN04book}
and the bibliographies therein. In \cite{FL11}, Fan and Liao proved the following result
which asserts that a polynomial with coefficients in
$\mathbb{Z}_p$ and with degree at least $2$, as a dynamical
system on $\mathbb{Z}_p$, admits at most countably many minimal
subsystems.
\smallskip

\noindent{\bf Theorem A.} {\em  Let $\phi$ be a polynomial with coefficients in $\mathbb{Z}_p$ and with degree at least $2$. Then we can decompose the space $\mathbb{Z}_p$ as
$$
     \mathbb{Z}_p = A \bigsqcup B \bigsqcup C
$$
where $A$ is the finite set consisting of all periodic points of
$\phi$, $B= \bigsqcup_i B_i$ is the union of all (at most countably
many) clopen invariant sets such that each subsystem $\phi: B_i \to
B_i$ is minimal, and each point in $C$ lies in the attracting basin
of a periodic orbit or of a minimal subsystem. }
\medskip

As for the affine polynomials, Fan, Li, Yao and Zhou
\cite{FLYZ07} showed that apart from some special cases like $ax$ with $a^n=1$, the systems have similar dynamical structures.

The decomposition in Theorem A is referred to as a minimal
decomposition. The topological dynamical structure of a polynomial
is hence totally described by its minimal decomposition. There are few
works on the minimal decomposition. Multiplications $f(x)=ax$, with $|a|_p=1$ on
$\mathbb{Z}_p$ ($p\ge 3$) were studied by Coelho and Parry
\cite{CP11Ergodic}.  All the
minimal components of a general affine polynomial on $\mathbb{Z}_p$ were exhibited by Fan, Li, Yao and Zhou \cite{FLYZ07}.  The minimal decomposition for
the quadratic polynomials in the case $p=2$ was obtained by Fan and Liao
\cite{FL11}. The minimal decomposition for
homographic maps on  the projective line over $\Qp$ was studied by Fan, Fan, Liao and Wang \cite{FFLW2013}.

In the present paper, we would like to study the dynamical systems $(\mathcal {O}_K, \phi)$ with $\phi\in \mathcal {O}_K\langle x\rangle$ being a convergent series. As we will show, the results in the
finite extension cases are different. If $K$ is a finite extension of $\Q_p$ and $K\neq \Q_{p}$, then $(\mathcal {O}_K, \phi)$ are always  non-minimal. Moreover, there are  uncountably many minimal subsystems of $(\mathcal {O}_K, \phi)$ for each $\phi\in \mathcal {O}_K\langle x\rangle $,  if there is a non-trivial minimal subset (i.e. a minimal subset consisting of infinitely many points).

Let $e$ be the ramification index of $K$ over $\Q_{p}$. We will prove the following theorem. The reader is referred to Section \ref{preliminary} for the definition of type $(k,e)$ of a subsystem.  Actually, a type $(k,e)$ subsystem contains uncountably many minimal subsystems which have the same dynamical structure (in fact, they are all conjugate to the same adding machine, see Section \ref{preliminary}). 


\begin{thm}\label{thm-decomposition}
 Let $\phi \in \mathcal{O}_{K}\langle x\rangle$. Suppose that  for each $n\geq 1$, $\phi^{\circ n}$ is not identity.
Then we have the following decomposition
$$
    \mathcal{O}_{K} = A \bigsqcup B \bigsqcup C
$$
where $A$ is the finite set consisting of all periodic points of
$\phi$,
$B= \bigsqcup_i B_i$ is the union of all (finite or countably
many) clopen invariant sets such that each $B_i$ is a finite union
of balls and each subsystem $\phi: B_i \to B_i$ is of type $(k\ell p^n,e)$ for some positive integers $1\leq k \leq p^f$, $\ell |(p^f-1)$, and $n\in \mathbb{N}$.
Each
point in $C$ lies in the attracting basin of a periodic orbit or of
a  subsystem of type $(k\ell p^n,e)$.

Moreover,  there are uncountably many minimal subsystems if $K\neq \mathbb{Q}_p$.
\end{thm}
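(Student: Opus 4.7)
The plan is to extend the level-by-level cycle analysis underlying Theorem~A to the richer quotient structure of $\mathcal{O}_K/\pi^n\mathcal{O}_K$. Since $\phi\in\mathcal{O}_K\langle x\rangle$ is $1$-Lipschitz on $\mathcal{O}_K$ and sends $\mathcal{O}_K$ into itself, it induces a well-defined map $\bar\phi_n$ on each finite quotient, and every $\phi$-orbit in $\mathcal{O}_K$ projects to a periodic cycle of $\bar\phi_n$ at every level $n$. A cycle of length $m$ at level $n$ corresponds to a $\phi$-invariant clopen union of $m$ balls of radius $|\pi|_p^n$, and the three pieces $A$, $B$, $C$ of the decomposition will be read off by classifying the fate of the descending chain of cycle-sets in which a given point $x\in\mathcal{O}_K$ lies.

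The technical heart is a lifting lemma describing how an $m$-cycle $\sigma$ at level $n$ extends at level $n+1$. Fixing one ball $B_0$ of $\sigma$ and studying the induced action of the return map $\phi^{\circ m}$ on the $p^f$ sub-balls of $B_0$, I expect three mutually exclusive outcomes: the cycle splits into several disjoint cycles, it fuses into a single cycle of length $mk$ (growth), or the return map strictly contracts $B_0$ and the chain collapses into an attracting orbit. Chains that end in contraction feed $A\cup C$, while chains whose lengths grow without bound furnish the components $B_i$. The label $(k\ell p^n,e)$ records the successive growth factors: $k\in\{1,\dots,p^f\}$ is a cycle length of $\bar\phi_1$ on the residue field $\mathbb{F}_{p^f}$, $\ell\mid p^f-1$ arises from the induced action at the next level (on the multiplicative group of the residue field), successive factors of $p$ appear once the multiplier $(\phi^{\circ m})'(x_0)$ enters $1+\pi\mathcal{O}_K$, and the ramification index $e$ governs the $\pi$-adic spacing between those $p$-fold growth steps. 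The chief obstacle is to prove that every non-terminating chain eventually enters this wild regime with a stable growth pattern, so that each $B_i$ is genuinely conjugate to the adding machine of the stated type; countability of $\{B_i\}$ will then follow from the countable supply of finite cycles at all levels, and compactness of $\mathcal{O}_K$ will account for the attracting basins that make up $C$.

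For the final clause, the hypothesis $K\neq\mathbb{Q}_p$ gives $f\geq 2$ or $e\geq 2$, so at each growth step there are at least two legitimate extensions among the $p^f$ sub-balls of $B_0$, rather than the single extension forced in the $\mathbb{Q}_p$ case. The admissible extensions form a tree of Cantor type, and each infinite branch singles out a distinct minimal component; this is consistent with the fact recalled in Section~\ref{preliminary} that any subsystem of type $(k\ell p^n,e)$ already contains uncountably many minimal subsystems conjugate to a common adding machine. Consequently, as soon as any $B_i$ is non-empty, the number of minimal subsystems is $2^{\aleph_0}$, which yields the final assertion of the theorem under the standing assumption that a non-trivial minimal subset exists.
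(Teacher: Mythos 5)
Your outline follows the same general strategy as the paper (analysis of the induced maps $\phi_n$ on $\mathcal{O}_K/\pi^n\mathcal{O}_K$ and classification of how cycles lift), but it stops short of the two points that actually carry the proof. First, what you call ``the chief obstacle'' --- that every non-terminating chain eventually settles into the stable pattern ``grow, then split exactly $e-1$ times, then grow again, forever'' --- is precisely the content of the paper's quantitative lemmas and you give no argument for it. The paper proves it by computing $\val\bigl(1+a_n+\cdots+a_n^{p-1}\bigr)$ via binomial estimates (Lemma \ref{order}): once $n\geq e+1$ and $\hat{A}_n=\min\{\val(a_n-1),n\}>e/(p-1)$, this valuation equals $\val(p)=e$ at every subsequent stage, which through the recursions $a_{n+1}\equiv a_n^p$, $\pi b_{n+1}\equiv b_n(1+a_n+\cdots+a_n^{p-1})$ and Proposition \ref{splits-p3} yields Propositions \ref{Cor-grow} and \ref{cor9}, hence the type $(k\ell p^n,e)$ in the statement; the transitional regime $\hat{A}_n\leq e/(p-1)$ must also be tracked (cases 2) and 3) of Proposition \ref{cor9}) to see that after finitely many levels one lands in the stable regime. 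Without this analysis the claimed form of the $B_i$ and of the basins in $C$ is unsubstantiated. Relatedly, your lifting trichotomy is too coarse: the correct classification (via the linearization $\Psi(x,t)=b_n(x)+a_n(x)t$ on the residue field) has four cases --- grow, split, grow tails, partially split --- and in the growing case one obtains $p^{f-1}$ cycles of length $pk$, not a single fused cycle; the case $a_n\not\equiv 0,1 \pmod{\pi}$ is where the divisor $\ell\mid p^f-1$ enters and must be handled separately.

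Second, you nowhere prove that $A$ is finite, which is part of the statement and is also what makes the level-by-level procedure terminate. For a convergent power series this is not automatic: the paper first bounds the set of possible periods (Propositions \ref{cyclelengthgrow}, \ref{cyclelengthsplit}, Corollary \ref{possible-periods}, themselves consequences of the type analysis above), and then, for each admissible period $q$, uses closure of $\mathcal{O}_K\langle x\rangle$ under composition together with the hypothesis that $\phi^{\circ q}$ is not the identity and the Weierstrass Preparation Theorem (Theorem \ref{WPT}) to conclude that $\phi^{\circ q}(x)-x$ has only finitely many zeros in $\mathcal{O}_K$. This finiteness is exactly what rules out the scenario in which a splitting cycle keeps producing splitting lifts at every level on larger and larger families of descendants (case 3) of Proposition \ref{splits-p3}); your appeal to compactness does not exclude it. The final uncountability claim you argue correctly in spirit (each growing stage with $e>1$ or $f>1$ branches into at least two invariant pieces, giving a Cantor tree of minimal components), and, as you note, it requires that some $B_i$ be nonempty, which is the standing assumption made in the paper's introduction.
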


To end this section, we point out that the obtained results could be applied to the study of rational maps on $\mathbb{Q}_p$. Since a rational map with coefficients in $\mathbb{Q}_p$ may have fixed points out of $\mathbb{Q}_p$, but in a finite extension of $\mathbb{Q}_p$. Thus one could study the corresponding dynamics on this finite extension, then translate the results to the restricted subsystem on $\mathbb{Q}_p$. A good example is the recent work \cite{FFLW2013} where multiplication dynamics on a quadratic extension is studied to obtain the dynamical structure of homographic dynamics on $\mathbb{Q}_p$.

The paper is organized as follows. In Section \ref{S-finite-ext}, some preliminaries on finite extensions of $\mathbb{Q}_p$ are recalled. In Section \ref{S-power-series}, we discuss some fundamental properties of the class $ \mathcal{O}_{K}\langle x\rangle$ of convergent series which will be useful for the proof of the main theorem. The techniques to study the minimality from local to global are fully developed in Section \ref{preliminary}. The proof of the main theorem will be given in Section \ref{S-proof}. At the end, as an application, we study the affine polynomials in Section \ref{S-affine}.
%

\bigskip
\section{Finite extensions of the field of $p$-adic numbers}\label{S-finite-ext}
We recall some basic notations and facts of finite extensions of the field of $p$-adic numbers.
The reader may consult \cite{Mah81,KobGTM58,Rob-GTM198,sch} for  more information on non-Archimedean fields.

Let $K$ be a finite
extension of the field $\Q_{p}$ of $p$-adic numbers. Denote by $d=[K:\Qp]$
the degree of the extension, i.e., the dimension of $K$ as a vector space over $\Qp$.
 The extended absolute value on $K$ is still denoted by $|\cdot|_{p}$.
For $x\in K^{*}:=K\setminus \{0\}$,  let $v_{p}(x):=-\log_{p}(|x|_{p})$
define the valuation of $x$, with convention $v_{p}(0):=\infty$.
One can show that there exists a unique positive integer $e$ which
is called the \emph{ramification index} of $K$ over $\Qp$, such that
$$v_{p}(K^*)=\frac{1}{e}\Z.$$ The extension $K$ over $\Qp$ is said to
be \emph{unramified} if $e=1$, \emph{ramified} if $e>1$ and
\emph{totally ramified} if $e=d$.  An element $\pi\in K$ is called an
\emph{uniformizer} if $v_{p}(\pi)=1/e$. The {local ring} $\mathcal{O}_{K}$ of $K$ is $\{x\in
K : |x|_p\leq 1\}$, whose elements are called  \emph{integers} of $K$.
The maximal idea of $\O$ is  $\mathcal{P}_{K}:=\{x\in K: |x|_{p}<1\}$.
Denote by $\mathbb{K}$ the residual class field $\O/\mathcal{P}_{K}$ of $K$.
 Then
$\mathbb{K}=\mathbb{F}_{p^f}$,  the finite field of $p^{f}$ elements
where $f=d/e$. Let $C=\{c_{0},c_{1},\cdots,c_{p^f-1}\}$ be a fixed
complete set of representatives of the cosets of $\mathcal{P}_{K}$
in $\mathcal{O}_{K}$. Then for each uniformizer $\pi$, every $x\in K$ has a unique $\pi$-adic
expansion of the form
\begin{equation}\label{piadic}
   x=\sum_{i=i_{0}}^{\infty}a_i\pi^{i}
\end{equation}
where $i_{0}\in \Z$ and $a_i\in C$ for all $i\geq i_{0}$.
For convenience of notations, we define $\val(x):=e\cdot v_{p}(x)$
for $x\in K $. Then $\val(K^*)=\Z $.

\bigskip
\section{Power series on $\O$}\label{S-power-series}
Recall that  $\mathcal{O}_K\langle x\rangle$ is the class of convergent power series with coefficients in $\mathcal{O}_K$. In this section, we mainly discuss about the stability of $\mathcal{O}_K\langle x\rangle$ under the classical operations.

It is easy to check that $\mathcal{O}_K\langle x\rangle$ is an algebra with respect to addition and multiplication.
\begin{prop}
The class $\mathcal{O}_K\langle x\rangle$  is closed with respect to  derivation and composition of functions.
\end{prop}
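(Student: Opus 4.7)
The plan is to treat the two closure properties separately and to exploit the ultrametric inequality together with the algebra structure established just above the proposition.

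\textbf{Derivation.} I first write $\phi(x)=\sum_{i\geq 0}a_i x^i\in\mathcal{O}_K\langle x\rangle$, so formally $\phi'(x)=\sum_{i\geq 1} i a_i x^{i-1}$. Each coefficient $ia_i$ lies in $\mathcal{O}_K$ because $i\in\mathbb{Z}\subset\mathcal{O}_K$ and $a_i\in\mathcal{O}_K$, and the estimate $|ia_i|_p\leq |a_i|_p\to 0$ (since $|i|_p\leq 1$) yields the required convergence condition. This is essentially a one-line verification.

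\textbf{Composition.} Let $\phi(x)=\sum_{i\geq 0}a_i x^i$ and $\psi(x)=\sum_{j\geq 0}b_j x^j$ both belong to $\mathcal{O}_K\langle x\rangle$. Since $\mathcal{O}_K\langle x\rangle$ is an algebra, each power $\psi^i$ expands as $\psi(x)^i=\sum_{j\geq 0}c_{i,j}x^j$ with $c_{i,j}\in\mathcal{O}_K$ and $c_{i,j}\to 0$ as $j\to\infty$ for every fixed $i$. The formal composition then has $x^j$-coefficient
\[
    d_j\;:=\;\sum_{i\geq 0}a_i c_{i,j}.
\]
Two things must be checked: that each $d_j$ is a well-defined element of $\mathcal{O}_K$, and that $d_j\to 0$.

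The first point is immediate from the ultrametric bound $|a_i c_{i,j}|_p\leq |a_i|_p\to 0$: the series defining $d_j$ converges in $\mathcal{O}_K$. For the second, I will combine a tail estimate on $i$ with the $j$-decay of $c_{i,j}$. Given $\varepsilon>0$, pick $N$ with $|a_i|_p<\varepsilon$ for all $i\geq N$; then by the ultrametric inequality
\[
    \Bigl|\sum_{i\geq N}a_i c_{i,j}\Bigr|_p\;\leq\;\sup_{i\geq N}|a_i|_p\,|c_{i,j}|_p\;<\;\varepsilon,
\]
uniformly in $j$. For the remaining finite head $\sum_{i<N}a_i c_{i,j}$, for each of the finitely many indices $i<N$ the decay $c_{i,j}\to 0$ yields some $J_i$ past which $|c_{i,j}|_p<\varepsilon$; taking $J=\max_{i<N}J_i$ gives $|d_j|_p<\varepsilon$ for all $j\geq J$.

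\textbf{Main obstacle.} The only nontrivial point is the composition. The subtlety is that $\psi(0)=b_0$ need not lie in $\mathcal{P}_K$, so the closure is \emph{not} a consequence of the Banach-algebra argument for the Tate algebra in one variable over a field with absolute value less than one on the maximal ideal; it genuinely uses that the ultrametric norm on $\mathcal{O}_K$ is bounded by $1$ and that $|a_i|_p\to 0$ dominates the contribution of the entire infinite tail uniformly in $j$. Once that uniform bound is in hand, the head-tail split above finishes the argument in a few lines.
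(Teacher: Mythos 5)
Your proof is correct and follows essentially the same route as the paper: identify the $x^j$-coefficient of $\phi\circ\psi$ as $\sum_i a_i c_{i,j}$ with $c_{i,j}\in\mathcal{O}_K$, and use the strong triangle inequality (tail in $i$ controlled uniformly in $j$ by $|a_i|_p\to 0$, head controlled by $c_{i,j}\to 0$ in $j$) to conclude the coefficients tend to zero. The only difference is cosmetic: the paper writes $c_{i,j}$ explicitly as a multinomial sum in the $b$'s, whereas you obtain its properties from the already-established algebra structure, and you spell out the head--tail estimate that the paper leaves implicit.
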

\begin{proof}
Let $$\phi(x)=\sum_{i=0}^{\infty}a_i x^i \in \mathcal{O}_K\langle x\rangle.$$
By noticing the facts that $\lim\limits_{i\rightarrow \infty} ia_i=0$ and
$\phi^{\prime}(x)=\sum_{i=1}^{\infty}i a_{i}x^{i-1},$
we have $\phi' \in \mathcal{O}_K\langle x\rangle$. Thus $\mathcal{O}_K\langle x\rangle$  is closed with respect to  derivations.

Now we prove that $\mathcal{O}_K\langle x\rangle$  is closed with respect to compositions of functions.  Let $\phi(x)=\sum_{i=0}^{\infty}a_i x^i, \psi(x)= \sum_{j=0}^{\infty}b_j x^j\in \mathcal{O}_K\langle x\rangle$. Notice that
$$\phi\circ \psi(x)=a_0+\sum_{k=1}^{\infty} a_{k}b_{0}^k +\sum_{i= 1}^{\infty}\left(\sum_{k=1}^{\infty}a_k
\sum_{\begin{subarray}{l}
    i_1+i_2+\cdots +i_k=i    \\
    i_1,\cdots, i_{k}\geq 0 \\
    \end{subarray} }b_{i_1}b_{i_2}\cdots b_{i_k}\right)x^i.$$
Since $\lim\limits_{i\rightarrow \infty}a_{i}=0$ and $\lim\limits_{j\rightarrow \infty}b_{j}=0$, the strong triangle inequality implies
$$\lim_{i\to \infty} \sum_{k=1}^{\infty}a_k
\sum_{\begin{subarray}{l}
    i_1+i_2+\cdots +i_k=i    \\
    i_1,\cdots, i_{k}\geq 0 \\
    \end{subarray} }b_{i_1}b_{i_2}\cdots b_{i_k} =0.$$
Thus $\phi\circ \psi \in \mathcal{O}_K\langle x\rangle.$
\end{proof}
\begin{rem} Remind that when $K=\mathbb{Q}_p$, the class $\mathcal{O}_K\langle x\rangle$ was called the class $\mathcal{C}$ in \cite{Ana02} where the stability of derivation for $\mathcal{C}$ was also proved. However, the stability of composition which was not checked in \cite{Ana02} will be needed in the proof of Theorem \ref{thm-decomposition}. We point out that the stability of composition is different from the classical stability of composition of convergent series in the field $K$. Here, we have to verify the conditions on the coefficients of a convergent series.
\end{rem}


For a power series $\phi(x)=\sum_{i=0}^{\infty}a_i x^i \in\mathcal{O}_K\langle x\rangle$, the largest integer $j$
such that $|a_j|_p=1$ is called the \emph{Weierstrass degree} of $\phi$, denoted by $wideg(\phi)$. If all coefficients of $\phi$ are in $\mathcal{P}_{K}$, we
 say that the Weierstrass degree of $\phi$ is infinite.
We will use the following Weierstrass Preparation Theorem
(see Sections 5.2.1-5.2.2 of \cite{BGR}).

\begin{thm}[Weierstrass Preparation Theorem]\label{WPT}
Let $\phi(x)=\sum_{i=0}^{\infty}a_i x^i \in\mathcal{O}_K\langle x\rangle$ be a nonzero convergent series with $wideg(\phi)<\infty$.  Let $j$ be the largest integer such
that $|a_j|_p=\max_{i\geq 0}|a_i|_p$. Then there is a monic polynomial $g\in \O[x]$ of degree $j$ and a power series $h\in\mathcal{O}_K\langle x\rangle$ such that $\phi=gh$, and $h(x)\neq 0 $ for all $x\in \O$.
\end{thm}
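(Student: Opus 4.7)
The plan is to derive the statement from a Weierstrass Division Theorem in the restricted power-series ring $\mathcal{O}_K\langle x\rangle$: for each $\psi\in\mathcal{O}_K\langle x\rangle$ there exist unique $q\in\mathcal{O}_K\langle x\rangle$ and $r\in\mathcal{O}_K[x]$ with $\deg r<j$ satisfying $\psi=q\phi+r$. Granting this, applying it to $\psi=x^j$ yields $x^j=q\phi+r$, so that $g(x):=x^j-r(x)$ is a monic polynomial of degree $j$ in $\mathcal{O}_K[x]$ and $g=q\phi$ as an identity in $\mathcal{O}_K\langle x\rangle$.

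To prove the division theorem I would first split $\phi=v+x^j u$ with $v:=\sum_{i<j}a_ix^i\in\mathcal{O}_K[x]$ and $u:=\sum_{i\geq 0}a_{j+i}x^i$. Because $|a_j|_p=1=\max_i|a_i|_p$ while $|a_{j+i}|_p<1$ for $i\geq 1$ (by the maximality of $j$), the series $u$ has invertible constant term and all higher coefficients in $\mathcal{P}_K$, hence is a unit of $\mathcal{O}_K\langle x\rangle$. Letting $P$ denote the projection onto polynomials of degree $<j$ and $Q=I-P$, the equation $\psi=q(v+x^j u)+r$ decomposes into $r=P(\psi)-P(qv)$ together with the fixed-point equation $q=u^{-1}x^{-j}\bigl(Q(\psi)-Q(qv)\bigr)$. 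I would establish convergence of the resulting iteration by filtering through powers of a uniformizer $\pi$: modulo $\pi$, $\bar\phi\in\mathbb{K}[x]$ is a polynomial of degree $j$ with unit leading coefficient and therefore admits ordinary Euclidean division, and successive corrections by $\pi$-multiples of this division produce a Cauchy sequence in the Gauss norm whose limit yields the required $q$ and $r$ (uniqueness follows from the same estimate applied to a hypothetical nonzero difference).

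The final step is to verify that the $q$ produced by dividing $x^j$ is a unit of $\mathcal{O}_K\langle x\rangle$; once this is known, $h:=q^{-1}\in\mathcal{O}_K\langle x\rangle$ satisfies $\phi=gh$, and the strong triangle inequality forces $|h(x)|_p=|h(0)|_p=1$ for every $x\in\mathcal{O}_K$, so $h$ does not vanish on $\mathcal{O}_K$. To see $q$ is a unit, reduce $g=q\phi$ modulo $\mathcal{P}_K$. Because the coefficients of $q$ tend to $0$ and those of $\phi$ of index $>j$ lie in $\mathcal{P}_K$, both $\bar q$ and $\bar\phi$ are honest polynomials in $\mathbb{K}[x]$, and $\bar\phi$ has degree exactly $j$ with leading coefficient $\bar a_j\neq 0$. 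Since $\bar g$ is monic of degree $j$, comparing degrees in $\bar q\cdot\bar\phi=\bar g$ forces $\bar q=\bar a_j^{-1}$, a nonzero constant; hence the constant term of $q$ has absolute value $1$ while all other coefficients lie in $\mathcal{P}_K$, so $q$ is a unit.

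The main obstacle is the division theorem itself: the naive Gauss-norm estimate on the map $q\mapsto u^{-1}x^{-j}\bigl(Q(\psi)-Q(qv)\bigr)$ is not contractive, so one cannot simply invoke a Banach fixed-point argument in one stroke. The extra ingredient is the filtration by powers of $\pi$ together with Euclidean division in $\mathbb{K}[x]$, which does give a genuine contraction at each level of the filtration. Once division is in hand, the passage to preparation reduces to the short degree comparison in the residue field carried out above.
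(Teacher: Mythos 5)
Your proposal is correct, and it follows essentially the same route as the paper's source: the paper gives no proof of Theorem \ref{WPT} but cites Sections 5.2.1--5.2.2 of \cite{BGR}, where preparation is deduced exactly as you do, by first proving Weierstrass division in $\mathcal{O}_K\langle x\rangle$ for a distinguished series (via the $\pi$-adic filtration/Euclidean-division-in-$\mathbb{K}[x]$ approximation and the Gauss norm), then dividing $x^j$ by $\phi$ and checking through reduction mod $\mathcal{P}_K$ that the quotient is a unit. Your auxiliary verifications (that $u$ and $q$ are units because their reductions are nonzero constants, and that $|h(x)|_p=1$ on $\mathcal{O}_K$ by the ultrametric inequality) are accurate, so no gap needs to be filled beyond writing out the sketched division argument in full.
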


\bigskip
\section{Induced dynamics on $\O/{\pi}^n \O$} \label{preliminary}

This section is devoted to local dynamics on $\O/{\pi}^n \O$ which will deduce the global dynamics on $\O$.

In the remainder of this paper we require $\pi$ to be a fixed uniformizer of $K$ and  $C=\{c_{0},c_{1},\cdots,c_{p^f-1}\}$ be a fixed
complete set of representatives of the cosets of $\mathcal{P}_{K}$ in $\mathcal{O}_{K}$. Then every
$x\in \mathcal{O}_{K}$ has a unique $\pi$-adic
expansion of the form
\begin{equation*}
   x=\sum_{i=0}^{\infty}a_i\pi^{i}
\end{equation*}
with $a_i\in C$ for all $i\geq 0$. We remark that the results in this paper do not depend on  the choices of $\pi$ and $C$.

Let $\phi \in \mathcal{O}_{K}\langle x\rangle$ be a convergent power series  of integral
coefficients. The dynamics of $(\mathcal{O}_{K}, \phi)$ can be
derived by those of its induced finite dynamics on
$\mathcal{O}_{K}/\pi^n\mathcal{O}_{K}$.
Let $n\ge 1$ be a positive integer. Denote by $\phi_n$ the induced
mapping of $\phi$ on $\O/{\pi}^n \O$, i.e.,
$$\phi_n (x \ {\rm mod} \ {\pi}^{n})=\phi (x)  \ \quad \ \ {\rm mod} \ {\pi}^{n}.$$


\begin{thm}[\cite{Ana06,CFF09}]\label{minimal-part-to-whole}
Let $\phi\in \O\langle x\rangle$ and $E\subset \O$ be a compact $\phi$-invariant set.
Then $\phi: E\to E$ is minimal if and only if $\phi_n: E/\pi^n\O \to
E/\pi^n\O$ is minimal for each $n\ge 1$.
\end{thm}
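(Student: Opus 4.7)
The plan is to prove the two directions separately, with the main preliminary step being to check that the induced map $\phi_n$ on $\O/\pi^n\O$ is well-defined. Since $\phi(x)=\sum_{i\ge 0} a_i x^i$ has $a_i\in\O$, I would factor $\phi(x)-\phi(y)=(x-y)Q(x,y)$ where $Q(x,y)=\sum_{i\ge 1} a_i(x^{i-1}+x^{i-2}y+\cdots+y^{i-1})$. Because $\lim_{i\to\infty} a_i=0$ and $|x|_p,|y|_p\le 1$, each term of $Q(x,y)$ has $p$-adic absolute value at most $|a_i|_p$, and the series converges in $\O$ with $|Q(x,y)|_p\le 1$. Hence $\phi$ is $1$-Lipschitz on $\O$: if $x\equiv y\pmod{\pi^n}$, then $\phi(x)\equiv\phi(y)\pmod{\pi^n}$. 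Since $E$ is $\phi$-invariant, the map $\phi_n:E/\pi^n\O\to E/\pi^n\O$ is well-defined, and the canonical projection $p_n:E\to E/\pi^n\O$ satisfies $p_n\circ\phi=\phi_n\circ p_n$.

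For the ``only if'' direction, I note that $\O/\pi^n\O$ is a finite set of cardinality $p^{fn}$, so $E/\pi^n\O$ is finite and carries the discrete topology. If $\phi:E\to E$ is minimal, then for any $x\in E$ the orbit $\{\phi^{\circ k}(x)\}_{k\ge 0}$ is dense in $E$. Applying the continuous surjection $p_n$, the set $\{\phi_n^{\circ k}(p_n(x))\}_{k\ge 0}$ is dense in $E/\pi^n\O$, and density in a finite discrete space means equality. Hence every orbit of $\phi_n$ equals $E/\pi^n\O$, so $\phi_n$ is minimal.

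For the ``if'' direction, assume $\phi_n$ is minimal on $E/\pi^n\O$ for every $n\ge 1$. Fix $x\in E$ and an arbitrary $y\in E$. For each $n\ge 1$, the minimality of $\phi_n$ produces an integer $k_n\ge 0$ such that $\phi_n^{\circ k_n}(p_n(x))=p_n(y)$, which by the equivariance just established translates to
$$|\phi^{\circ k_n}(x)-y|_p \le |\pi^n|_p = p^{-n/e}.$$
Letting $n\to\infty$ shows that $y$ lies in the closure of the forward orbit of $x$. Since $x,y\in E$ were arbitrary, every orbit of $\phi$ is dense in $E$, so $\phi:E\to E$ is minimal.

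The proof is essentially routine once the $1$-Lipschitz property is in hand; I do not anticipate a serious obstacle. The only point that requires care (and that distinguishes the setting of convergent series from mere polynomials) is the convergence argument for the factor $Q(x,y)$, which relies on $\lim_{i\to\infty}a_i=0$ rather than finiteness of the sum. Everything else amounts to transporting (dense orbit) statements across the continuous, equivariant, and surjective projection $p_n$ and exploiting that the target $E/\pi^n\O$ is finite.
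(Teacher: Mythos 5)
Your proof is correct and is essentially the standard argument: the paper itself gives no proof of this theorem but cites \cite{Ana06,CFF09}, and your route (the $1$-Lipschitz estimate via the factorization $\phi(x)-\phi(y)=(x-y)Q(x,y)$ to make $\phi_n$ well-defined and equivariant, pushing dense orbits forward through $p_n$ onto the finite discrete quotient for one direction, and using transitivity at every level $n$ to approximate any $y$ by $\phi^{\circ k_n}(x)$ within $p^{-n/e}$ for the other) is exactly the argument used in those references. No gaps; the only point needing care, the convergence of $Q(x,y)$ from $\lim_i a_i=0$, is handled correctly.
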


By Theorem \ref{minimal-part-to-whole}, to study the minimality of $\phi$, it suffices to study the minimality of all finite dynamics $\phi_n$. To this end, we need an induction from the level $n$ to the level $n+1$. In other words, we wants to predict the dynamical structure of $\phi_{n+1}$ at level $n+1$ from that of $\phi_n$ at level $n$.
The idea of predicting comes from the work of
desJardins and Zieve \cite{DZunpu} where the case
$K=\Q_p$ ($p\geq 3$) was studied. This idea allowed Fan and Liao \cite{FL11} to give a
minimal decomposition theorem (Theorem A in the first section) for any polynomial with coefficients in $\mathbb{Z}_p$.

A collection $\sigma=(x_1, \cdots, x_k)$ of $k$ distinct points in $\O/{\pi}^n \O $ is called a
{\it cycle of $\phi_n$ of length $k$} or a {\it $k$-cycle at level $n$}, if
$$ \phi_n(x_1)=x_2, \cdots, \phi_n(x_i)=x_{i+1}, \cdots, \phi_n(x_k)=x_1.$$

Set
$$ X_\sigma:=\bigsqcup_{i=1}^k  X_i \ \ \mbox{\rm where}\ \
X_i:=\{ x_i +\pi^nt; \ t\in C\} \subset \O/{\pi}^{n+1} \O.$$
Then
\[
\phi_{n+1}(X_i) \subset X_{i+1}  \ (1\leq i \leq k-1) \ \ \mbox{\rm
and}\ \  \phi_{n+1}(X_k) \subset X_1.\]


In the following we shall study the behavior of the finite dynamics
$\phi_{n+1}$ on the $\phi_{n+1}$-invariant set $X_\sigma$ and determine all
cycles in $X_\sigma$ of $\phi_{n+1}$, which will be  called  {\it lifts} of
$\sigma$. Remark that the length of any lift $\tilde{\sigma}$ of
$\sigma$ is a multiple of $k$.

  Let $\mathbb{X}_i:=x_i+\pi^n \O=\{x\in \O: x\equiv x_i \ ({\rm mod} \ \pi^n)\}$ be the closed disk of radius $p^{-n/e}$  centered at $x_i\in \sigma$ and
  $$\mathbb{X}_{\sigma}:=\bigsqcup_{i=1}^{k} \mathbb{X}_i$$ be the clopen set corresponding to the cycle $\sigma$.

 Let $\psi:=\phi^{\circ k}$ be the $k$-th iterate of $\phi$. Then any point in $\sigma$
 is fixed by $\psi_n$, the $n$-th induced map of $\psi$. For any point $x\in \mathbb{X}_{\sigma}$,
 we have $(\psi(x)-x)/\pi^{n}\in \mathcal{O}_{K}$.
 Using Taylor Expansion, for $n\geq 1$, we have
 $$\psi(x+\pi^{n}t)\in x+\pi^{n}(\frac{\psi(x)-x}{\pi^{n}})+\pi^{n}\psi^{\prime}(x)t+\pi^{2n}\O, \quad \forall x\in \mathbb{X}_{\sigma}, \ \forall t\in\O.$$
Then it leads us to define the following two functions from $\mathbb{X}_{\sigma}$ to $\O$.
 For $x\in \mathbb{X}_{\sigma}$, let
\begin{eqnarray}
& &a_n(x):=\psi'(x)=\prod_{j=0}^{k-1} \phi'(\phi^{\circ j}(x)) \\
& &b_n(x):=\frac{\psi(x)-x}{\pi^n}=\frac{\phi^{\circ k}(x)-x}{\pi^n}.
\end{eqnarray}
The following lemma shows that the function $a_n(x)$
(mod $\pi^{n}$) is always constant on $\mathbb{X}_\sigma$.

\begin{lem}\label{lem1}
Let $n\geq 1 $ and $\sigma=(x_1, \cdots, x_k)$ be a $k$-cycle of $\phi_n$. \\
\indent {\rm (i)}\ For  $1\leq i\leq k$, we have $$
a_n(x+t\pi^n)\equiv a_n(x) \ \ ({\rm mod} \ \pi^n), \ \  \forall x\in \mathbb{X}_i, \ \forall t\in \O.$$

 \indent {\rm (ii)}\ For $1\leq i,j\leq k$, we have
$$a_n(x)\equiv a_n(y) \ \ ({\rm mod} \ \pi^n), \ \ \forall x\in \mathbb{X}_i, \ \forall y \in\mathbb{X}_j.$$

\indent{\rm (iii)}\ For $1\leq i\leq k$, we have
\[ b_n(x) \equiv b_n(x+t \pi^{n}) \quad ({\rm mod} \ \pi^A), \ \ \forall x\in \mathbb{X}_i, \ \forall t\in \O.
\]
where
 $A:=\min \{ v_{\pi} (a_n(x)-1),n \}$ for $x\in \mathbb{\sigma}$.
\\
 \indent {\rm (iv)}\  If $a_n(x) \not\equiv 0 \ ({\rm mod} \
{\pi})$ for some $x\in \mathbb{X}_\sigma$, then  we
have
$$\min \{v_{\pi}(b_n(x)),n \}=
\min \{v_{\pi}(b_n(y)),n \}, \ \forall x,y \in \mathbb{X}_{\sigma}. $$ Consequently, $\min
\{v_{\pi}(b_n(x)),A\}= \min \{v_{\pi}(b_n(y)),A \}$.
\end{lem}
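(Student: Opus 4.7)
The common engine is the Taylor expansion for convergent power series. If $\phi(y)=\sum_{i\ge 0}a_i y^i\in\O\langle x\rangle$, then expanding $(x+h)^i$ and rearranging (the inner sums converge because $a_i\to 0$) yields
$$\phi(x+h)=\phi(x)+h\,\phi'(x)+h^{2}R(x,h),\qquad R(x,h)\in\O,$$
for $x,h\in\O$, since all binomial coefficients are integers and all $a_i$ lie in $\O$. The same identity holds verbatim for $\psi:=\phi^{\circ k}$ and for $\phi'$, both of which lie in $\O\langle x\rangle$ by the composition and derivation closures proved earlier in this section. Specialising to $h=\pi^n t$ gives the master congruence
$$\phi(x+\pi^n t)\equiv \phi(x)+\pi^n t\,\phi'(x)\pmod{\pi^{2n}}\qquad(t\in\O),$$
together with its analogues for $\psi$ and $\phi'$. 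Everything below is bookkeeping from these congruences.

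For (i), the master congruence applied to $\phi$ immediately gives $\phi(x+\pi^n t)\equiv\phi(x)\pmod{\pi^n}$, and induction on $j$ yields $\phi^{\circ j}(x+\pi^n t)\equiv\phi^{\circ j}(x)\pmod{\pi^n}$. Applying it now to $\phi'$ shows that each factor $\phi'(\phi^{\circ j}(x+\pi^n t))$ agrees with $\phi'(\phi^{\circ j}(x))$ mod $\pi^n$; multiplying the factors gives (i). For (ii), by (i) it suffices to compare $a_n(x_i)$ with $a_n(x_j)$; both expand to $\prod_{\ell=0}^{k-1}\phi'(x_\ell)$ taken in different cyclic orders, so they coincide exactly.

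For (iii), apply the master congruence to $\psi$, subtract $(x+\pi^n t)$, and divide by $\pi^n$:
$$b_n(x+\pi^n t)\equiv b_n(x)+t\bigl(a_n(x)-1\bigr)\pmod{\pi^{n}}.$$
Since $\val(a_n(x)-1)\ge A$, reducing further mod $\pi^A$ kills the correction and yields (iii).

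The main work is in (iv), and the key ingredient is the semi-conjugation
$$b_n\bigl(\phi(x)\bigr)\equiv \phi'(x)\,b_n(x)\pmod{\pi^n},$$
obtained by applying the master congruence to $\phi$ at $x$ with $h=\pi^n b_n(x)$ and using $\phi\circ\phi^{\circ k}=\phi^{\circ k}\circ\phi$ to identify the left-hand side. The hypothesis forces $a_n=\prod_\ell\phi'(x_\ell)$ to be a unit mod $\pi$, so each $\phi'(x_\ell)$ is a unit in $\O$, and by (i) $\phi'$ is a unit throughout $\mathbb{X}_\sigma$. Case-splitting on whether $\val(b_n(x))<n$ or $\val(b_n(x))\ge n$ then converts the semi-conjugation into $\min\{\val(b_n(\phi(x))),n\}=\min\{\val(b_n(x)),n\}$, so this truncated valuation is $\phi$-invariant on $\mathbb{X}_\sigma$. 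Tracking a point around the cycle transfers the value from one ball to another; coupling this with the mod-$\pi^A$ constancy from (iii) on each individual ball completes the within-ball comparison, and the ``consequently'' clause is immediate since $A\le n$. The main obstacle is precisely this final synthesis: reconciling the coarse mod-$\pi^A$ control on a single ball (from (iii)) with the sharp $\phi$-invariance of $\min\{\val(b_n),n\}$ across balls is why the buildup (i)--(iii) must be carried out first in exactly this order.
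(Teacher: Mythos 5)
Parts (i)--(iii) and the key congruence in (iv) of your write-up are exactly the paper's argument: the integral first-order Taylor remainder for series in $\O\langle x\rangle$, the fact that $\phi'\in\O\langle x\rangle$ (hence is $1$-Lipschitz), the relation $b_n(x+\pi^n t)\equiv b_n(x)+t(a_n(x)-1)\pmod{\pi^n}$ for (iii), and the semi-conjugation $b_n(\phi(x))\equiv \phi'(x)\,b_n(x)\pmod{\pi^n}$ obtained from $\phi\circ\phi^{\circ k}=\phi^{\circ k}\circ\phi$ for (iv). One minor slip in (ii): $a_n(x_i)$ and $a_n(x_j)$ equal the cyclic product $\prod_{\ell}\phi'(x_\ell)$ only modulo $\pi^n$ (since $\phi^{\circ m}(x_i)$ is merely congruent to $x_{i+m}$ modulo $\pi^n$), not exactly; that congruence is all you need.

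The one place you go beyond the paper is the final synthesis in (iv), and that step does not deliver what you claim. The semi-conjugation gives invariance of $\min\{\val(b_n(\cdot)),n\}$ along the $\phi$-orbit, i.e.\ it compares $x$ with $\phi(x)\in\X_{i+1}$; to compare two arbitrary points of $\X_\sigma$ you must also compare two points of the same ball, and there (iii) only gives agreement modulo $\pi^{A}$, which yields constancy of $\min\{\val(b_n(\cdot)),A\}$ on $\X_\sigma$ (the ``consequently'' clause) but not of $\min\{\val(b_n(\cdot)),n\}$. In fact the $n$-truncated quantity can genuinely vary inside a single ball: for $K=\Qp$, $\phi(x)=(1+p)x$ and $\sigma=(0)$ at level $n=2$, one has $a_2=1+p\not\equiv 0\pmod{p}$ and $b_2(p^2t)=pt$, so $\min\{\val(b_2),2\}$ equals $2$ at $t=0$ and $1$ at $t=1$. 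Thus only the between-ball orbit-invariance together with the $A$-truncated constancy is provable---which is precisely what the paper's own proof establishes (it stops at the semi-conjugation and the unit argument for $\phi'$) and all that is used later, through the remark that $B_n$ is independent of $x$ when $B_n<\hat{A}_n$. So your proof matches the paper's in substance; just phrase the conclusion of (iv) as orbit-invariance plus the $A$-truncated equality, rather than asserting $\min\{\val(b_n(x)),n\}=\min\{\val(b_n(y)),n\}$ for arbitrary $x,y\in\X_\sigma$, and do not lean on (iii) for an $n$-level within-ball comparison.
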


\begin{proof}
Since  $\phi \in \O\langle x\rangle$, it follows that $\phi^{\prime}\in \O\langle x\rangle$.
 Then, the assertion (i) is a direct consequence of
\[
a_n(x+t\pi^n) \equiv  \prod_{j=0}^{k-1} \phi'(\phi^{\circ j} (x+t\pi^n)) \equiv
\prod_{j=0}^{k-1} \phi'(\phi^{\circ j}(x))  \quad ({\rm mod} \ {\pi}^n),
\]
where the second equality follows form that $\phi^{\prime}\in\O\langle x\rangle$.

 Assertion {\rm (ii)} follows directly from the definition of
$a_n(x)$ and the facts that $\phi^{\prime}\in\O\langle x\rangle$ and  $\sigma = (x_i, \phi_n(x_i), \cdots,
\phi_n^{\circ k-1}(x_i))$.

The $1$-order Taylor Expansion of $\psi$ at $x$ gives
\begin{align*}
\psi(x+{\pi}^n t) -(x+{\pi}^n t) 
 \equiv {\pi}^n\left(\frac{\psi(x)-x}{{\pi}^n}\right) +{\pi}^n t(\psi'(x)-1) \quad ({\rm mod} \ {\pi}^{2n}).
\end{align*}
Hence
\begin{align}\label{bnne}
 b_{n}(x+{\pi}^n t)\equiv b_n(x) + t (a_n(x)-1) \quad ({\rm mod} \ {\pi}^{n}).
 \end{align}
Then (iii) follows.

Write
$$
\psi(\phi(x))-\phi(x) = \phi(\phi^{\circ k}(x))-\phi(x) = \phi(x+{\pi}^n b_n(x))
-\phi(x).
$$
The $1$-order  Taylor Expansion of $\phi$ at $x$ implies $$
\psi(\phi(x))-\phi(x) \equiv {\pi}^n b_n(x) \phi'(x) \quad ({\rm mod} \
{\pi}^{2n}).
$$
Hence we have
\[b_{n}(\phi(x))\equiv b_n(x) \phi'(x) \quad ({\rm mod} \ {\pi}^{n}).
\]
Thus we obtain (iv), because $a_n(x) \not\equiv 0 \ ({\rm mod} \
{\pi})$ (for some $x \in \mathbb{X}_\sigma $) implies $\phi'(y) \not\equiv 0 \
({\rm mod} \ {\pi})$ for all $y\in \mathbb{X}_\sigma$.
\end{proof}

\medskip

The $1$-order Taylor Expansion of $\psi$ at $x$ implies
\begin{eqnarray}\label{linearization}
 \psi(x+\pi^n t) \equiv x+\pi^n b_n(x) + \pi^n a_n(x) t \quad ({\rm mod} \ \pi^{2n}).
 \end{eqnarray}
 Define
an affine map  $$\Psi: \mathbb{X}_{\sigma}\times (\O/\pi\O) \rightarrow \O/\pi\O $$
by
$$\Psi(x,t)=b_n(x)+a_n(x) t \quad ({\rm mod} \ \pi).$$

An important consequence of  formula (\ref{linearization}) shows that $\psi_{n+1}:
X_i \to X_i$ is conjugate to the linear map $$ \Psi(x, \cdot):
\O/{\pi} \O \to \O/{\pi} \O$$ for some $x\in \mathbb{X}_i$.

We could call it  the \emph{linearization}
of $\psi_{n+1}: X_i \to X_i$.

In order to determine the linearization of $\psi_{n+1}$, we only need to
get the values modulo $\pi$ of $a_n(x)$ and $b_n(x)$. As we see in Lemma \ref{lem1}, the function $a_n(x)$
(mod $\pi$) is always constant on $\mathbb{X}_\sigma$ and the function $b_n(x)$
(mod $\pi$) is also constant on each $\mathbb{X}_i$ but under the condition
$a_n(x)\equiv 1$ (mod $\pi$).
For simplicity, sometimes we shall write $a_n$
and $b_n$ without mentioning $x$ when we only care about the value of $a_n(x)\ ({\rm mod} \ {\pi}^{n})$ and when we only care about $b_n(x)\equiv 0\ ({\rm mod} \ {\pi}) $ or $b_n(x)\not\equiv 0\ ({\rm mod} \ {\pi}) $ under $a_n(x)\equiv 1\ ({\rm mod} \ {\pi}^{n})$.


\medskip
We remind that the cardinality of the residual field $\K =\O/\P =\O/{\pi} \O $ of $K$ is
$p^f$. The characteristic of the field $\K $ is $p$. The multiplicative group $\K^* $ is cyclic of order $p^f-1$.

The above analysis allows us to
distinguish the following four behaviors  of $\phi_{n+1}$ on $X_{\sigma}$:\\
 \indent {\rm (a)} If $a_n \equiv 1 \ ({\rm mod} \ \pi)$ and
 $b_n \not\equiv 0 \ ({\rm mod} \ \pi)$, then for any $x\in \mathbb{X}_\sigma$, $\Psi(x, \cdot)$ preserves $p^{f-1}$ cycles of length $p$, so
 $\phi_{n+1}$ restricted to $X_\sigma$ preserves $p^{f-1}$ cycles of length $pk$. In this case we say $\sigma$ {\it grows}.\\
 \indent {\rm (b)} If $a_n \equiv 1 \ ({\rm mod} \ \pi)$ and
 $b_n \equiv 0 \ ({\rm mod} \ \pi)$, then for any $x\in \mathbb{X}_\sigma$, $\Psi(x, \cdot)$ is the identity, so $\phi_{n+1}$ restricted to $X_\sigma$
 preserves
 $p^f$ cycles  of length $k$. In this case we say $\sigma$ {\it splits}. \\
 \indent {\rm (c)} If $a_n \equiv 0 \ ({\rm mod} \ \pi)$, then for any $x\in \mathbb{X}_\sigma$, $\Psi(x, \cdot)$ is constant, so $\phi_{n+1}$ restricted to $X_\sigma$
 preserves
 one cycle of length $k$ and  the remaining points of $X_\sigma$ are mapped into this cycle.
  In this case we say $\sigma$ {\it grows tails}. \\
  \indent {\rm (d)} If $a_n \not\equiv0, 1 \ ({\rm mod} \ \pi)$, then for any $x\in \mathbb{X}_\sigma$, $\Psi(x, \cdot)$ is a permutation
  and the $l$-th iterate of $\Psi(x, \cdot)$ reads
 \begin{eqnarray*}
\Psi^{\circ l}(x, t) = b_n(a_n^{l}-1) /(a_n-1) +a_n^{l} t,
\end{eqnarray*}
so that $$ \Psi^{\circ l}(x,t)-t =(a_n^{l}-1) \left( t+
\frac{b_n}{a_n-1}\right).$$
 Thus,
$\Psi(x, \cdot)$ admits a single fixed point $t=-b_n/(a_n-1)$, and the
remaining points  lie on cycles of length $\ell$, where $\ell$ is the
order of $a_n$ in $(\O/ \pi\O)^*=\K^*$. So, $\phi_{n+1}$ restricted to
$X_\sigma$ preserves one cycle of length $k$ and $\frac{p^f-1}{\ell}$ cycles
of length $k\ell$. In this case we say $\sigma$ {\it partially splits}.
\medskip

Let $\sigma=(x_1,\dots,x_k)$ be a $k$-cycle of $\phi_n$ and let
$\tilde{\sigma}$ be a lift of $\sigma$ of length $kr$, where $r\geq
1$ is an integer. It follows immediately that $\mathbb{X}_{\tilde{\sigma}}\subset \mathbb{X}_\sigma$. For $x\in \mathbb{X}_{\tilde{\sigma}}$, we shall study the relation between $(a_n, b_n)$ and $(a_{n+1},
b_{n+1})$. Our aim is to see the change of nature from a cycle to
its lifts.

\begin{lem}\label{anbn} We have
\begin{eqnarray}\label{an}
a_{n+1}(x) \equiv a_n^r(x) \quad ({\rm mod} \ \pi^{n}),
\end{eqnarray}

\begin{equation}\label{bn}
\pi b_{n+1}(x) \equiv   b_n(x)(1+a_n(x)+ \cdots +a_n^{r-1}(x))  \quad ({\rm mod} \ \pi^{n}).
\end{equation}
\end{lem}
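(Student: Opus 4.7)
The plan is to exploit the fact that a lift $\tilde{\sigma}$ of $\sigma$ of length $kr$ is itself a cycle of $\phi_{n+1}$ of length $kr$, so that the iterate associated to $\tilde{\sigma}$ is $\phi^{\circ kr}=\psi^{\circ r}$. Thus by definition $a_{n+1}(x)=(\psi^{\circ r})'(x)$ and $\pi^{n+1} b_{n+1}(x)=\psi^{\circ r}(x)-x$ for $x\in \mathbb{X}_{\tilde{\sigma}}$. The two required congruences will then follow from the chain rule plus a first-order Taylor argument, respectively a telescoping sum, using that $a_n(x)$ is constant mod $\pi^n$ along the $\psi$-orbit of $x$ by Lemma \ref{lem1}.

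For the first congruence (\ref{an}), I would first note that $\mathbb{X}_{\tilde{\sigma}}\subset\mathbb{X}_{\sigma}$ and that each $\mathbb{X}_i$ is $\psi$-invariant (since $\psi_n$ fixes the points of $\sigma$). Hence for $x\in\mathbb{X}_{\tilde{\sigma}}$ we have $\psi^{\circ j}(x)\equiv x\pmod{\pi^n}$ for every $j\ge 0$. Since $\psi'\in \mathcal{O}_K\langle x\rangle$, the proof of Lemma \ref{lem1}(i) gives $\psi'(\psi^{\circ j}(x))\equiv \psi'(x)\pmod{\pi^n}$. Applying the chain rule,
\begin{equation*}
a_{n+1}(x)=(\psi^{\circ r})'(x)=\prod_{j=0}^{r-1}\psi'(\psi^{\circ j}(x))\equiv \psi'(x)^{r}=a_n(x)^{r}\pmod{\pi^{n}},
\end{equation*}
which is exactly (\ref{an}).

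For the second congruence (\ref{bn}), I would write the telescoping identity
\begin{equation*}
\psi^{\circ r}(x)-x=\sum_{j=0}^{r-1}\bigl(\psi(\psi^{\circ j}(x))-\psi^{\circ j}(x)\bigr)=\pi^{n}\sum_{j=0}^{r-1}b_n(\psi^{\circ j}(x)),
\end{equation*}
so that $\pi b_{n+1}(x)\equiv \sum_{j=0}^{r-1}b_n(\psi^{\circ j}(x))\pmod{\pi^{n}}$. It then remains to show
\begin{equation*}
b_n(\psi^{\circ j}(x))\equiv b_n(x)\, a_n(x)^{j}\pmod{\pi^{n}}.
\end{equation*}
This follows by induction on $j$, the step being a first-order Taylor expansion of $\psi$ at $y=\psi^{\circ j}(x)$ applied to $\psi(y)=\psi(y+\pi^{n}b_n(y))$, giving $b_n(\psi(y))\equiv b_n(y)\psi'(y)\pmod{\pi^{n}}$, exactly as in the derivation of (iv) of Lemma \ref{lem1}; combining with Lemma \ref{lem1}(ii) which gives $\psi'(y)\equiv a_n(x)\pmod{\pi^{n}}$ for $y\in\mathbb{X}_{\sigma}$ closes the induction. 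Plugging back yields (\ref{bn}).

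The only delicate point, and the step I would be most careful with, is the bookkeeping of $p$-adic precisions: the Taylor expansion gives mod $\pi^{2n}$ information, but we only need mod $\pi^{n}$ for $b_n$, so after dividing by $\pi^{n}$ the precision drops exactly to $\pi^{n}$; similarly (\ref{bn}) is a congruence mod $\pi^{n}$ after factoring one $\pi$ from $\pi^{n+1}b_{n+1}(x)$. Once this is checked, both identities fall out without any further computation.
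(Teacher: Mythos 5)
Your proof is correct and is essentially the paper's own argument: (\ref{an}) is obtained identically from the chain rule together with Lemma \ref{lem1}, and your telescoping sum with the induction $b_n(\psi^{\circ j}(x))\equiv b_n(x)a_n(x)^{j} \ ({\rm mod}\ \pi^{n})$ is just a reorganization of the paper's $r$-fold iteration of the linearization (\ref{linearization}) at the fixed base point $x$, yielding the same geometric sum $b_n(1+a_n+\cdots+a_n^{r-1})$. One small slip: the expression ``$\psi(y)=\psi(y+\pi^{n}b_n(y))$'' should read $\psi(\psi(y))=\psi(y+\pi^{n}b_n(y))$, but the Taylor step and the conclusion $b_n(\psi(y))\equiv b_n(y)\psi'(y)\ ({\rm mod}\ \pi^{n})$ that you draw from it are correct.
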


\begin{proof}
 The formula (\ref{an}) follows from
\[
a_{n+1}(x)  \equiv (\psi^{\circ r})'(x) \equiv
\prod_{j=0}^{r-1} \psi'(\psi^{\circ j}(x))
 \equiv a_n^r(x) \ ({\rm mod} \ \pi^{n}).
 \]

 By repeating $r$-times of the linearization (\ref{linearization}), we obtain
 $$
 \psi^{\circ r}(x) \equiv x + \Psi^{\circ r}(x, 0) \pi^n \quad ({\rm mod} \
 \pi^{2n}),
 $$
 where $\Psi^{\circ r}(x,t)$ means the $r$-th composition of $\Psi(x,r)$ as function
 of $t$. However,
$$
\Psi^{\circ r}(x ,0) = b_n(x)
 (1+a_n(x)+ \cdots + a_n^{r-1}(x)).
$$
Thus (\ref{bn}) follows from
  the definition of $b_{n+1}$ and the above two expressions.
\end{proof}

By Lemma \ref{anbn}, we immediately obtain the following
proposition.
\begin{prop}
Let $n\geq 1$. Let $\sigma$ be a $k$-cycle of $\phi_n$
and $\tilde{\sigma}$ be a lift of $\sigma$. Then  we have\\
 \indent {\rm 1)} if $a_n \equiv 1 \ ({\rm mod} \ \pi)$, then $a_{n+1} \equiv 1 \ ({\rm mod} \ \pi)$;\\
 \indent {\rm 2)} if $a_n \equiv 0 \ ({\rm mod} \ \pi)$, then $a_{n+1} \equiv 0 \ ({\rm mod} \ \pi)$;\\
 \indent {\rm 3)} if $a_n \not\equiv 0,1 \ ({\rm mod} \ \pi)$ and $\tilde{\sigma}$ is of length
 $k$, then $a_{n+1} \not\equiv 0,1 \ ({\rm mod} \ \pi)$;\\
 \indent {\rm 4)} if $a_n \not\equiv 0,1 \ ({\rm mod} \ \pi)$ and $\tilde{\sigma}$ is of length
 $k\ell$ where $\ell \ge 2$ is the order of $a_n$ in $(\O/\pi\O)^*$, then $a_{n+1} \equiv 1 \ ({\rm mod} \ \pi)$.
\end{prop}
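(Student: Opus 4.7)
The proposition is essentially a direct reduction of the key congruence established in Lemma \ref{anbn}, namely
\[
 a_{n+1}(x) \equiv a_n(x)^r \pmod{\pi^n},
\]
where $r = |\tilde{\sigma}|/|\sigma|$ is the ratio of the lift length to the cycle length. So the plan is to reduce this congruence modulo $\pi$ and, in each of the four cases, compute the value of $a_n^r \pmod{\pi}$ directly from the hypothesis on $a_n \pmod{\pi}$ and the value of $r$.

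For case 1), observe that $a_n \equiv 1 \pmod{\pi}$ gives $a_n^r \equiv 1 \pmod{\pi}$ for every $r \geq 1$, so $a_{n+1} \equiv 1 \pmod{\pi}$. For case 2), $a_n \equiv 0 \pmod{\pi}$ gives $a_n^r \equiv 0 \pmod{\pi}$ since $r \geq 1$. Case 3) assumes $r = 1$, in which case $a_{n+1} \equiv a_n \pmod{\pi^n}$, so modulo $\pi$ we have $a_{n+1} \equiv a_n \not\equiv 0, 1$. Finally, case 4) assumes $r = \ell$ where $\ell$ is by definition the order of $a_n$ in $\K^* = (\O/\pi\O)^*$; hence $a_n^\ell \equiv 1 \pmod{\pi}$ and so $a_{n+1} \equiv 1 \pmod{\pi}$.

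There is essentially no obstacle: the only nontrivial input is Lemma \ref{anbn}, which has already been established, together with the observation (coming from the four-case analysis (a)--(d) of the linearization preceding this proposition) that the possible lift ratios $r$ are precisely those listed in the statement. Once this is noted, the four assertions are immediate algebraic consequences of reducing $a_{n+1} \equiv a_n^r \pmod{\pi^n}$ modulo $\pi$.
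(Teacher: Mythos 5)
Your proposal is correct and follows exactly the paper's route: the paper deduces the proposition "immediately" from Lemma \ref{anbn} by reducing $a_{n+1}\equiv a_n^r \ ({\rm mod}\ \pi^n)$ modulo $\pi$ (valid since $n\geq 1$) with $r$ the lift-length ratio, just as you do. No gap; nothing further is needed.
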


This result is interpreted as follows in a dynamical way.\\
 \indent {\rm i)} If $\sigma$ grows or splits, then any lift $\tilde{\sigma}$ grows
or splits. \\
 \indent {\rm ii)}
If $\sigma$ grows tails, then the single lift $\tilde{\sigma}$
also grows tails. \\
 \indent {\rm iii)}
 If $\sigma$ partially splits, then the
 lift $\tilde{\sigma}$
 of the same length as $\sigma$ partially splits, and the other lifts  of length $k\ell$ grow or split.

\medskip
By the above discussion, the case of growing tails is simple.
If $\sigma=(x_1, \cdots, x_k)$ is a cycle of $\phi_n$ which grows
tails, then $\phi$ admits a $k$-periodic point $x_0$ in the clopen set
$\mathbb{X}_\sigma$ and $\mathbb{X}_\sigma$ is
contained in the attracting basin of the periodic orbit $x_0,
\phi(x_0), \cdots, \phi^{\circ (k-1)}(x_0)$.

Similarly, for the case of partially splitting, we can also find a periodic orbit in $\mathbb{X}_\sigma$, and other parts are reduced to the cases of growing and splitting. Hence, we mainly study these later cases.

For a cycle $\sigma=(x_1,\dots,x_k)$ at level $n$ and $x\in \mathbb{X}_\sigma$, let $$A_n(x):= \val(a_n(x)-1), \quad B_n(x):= \val(b_n(x)).$$
By Lemma
\ref{lem1}, $\hat{A}_{n}(x):=\min\{A_{n}(x),n\}$ does not depend on the choice of $x\in \mathbb{X}_{\sigma} $ and if $B_n(x) < \min \{A_n(x), n \} $ then $B_n(x)$ does
not depend on the choice of $x\in \mathbb{X}_\sigma$. Sometimes, there
is no difference when we choose $x\in \mathbb{X}_\sigma$. So,
without misunderstanding, we shall write $A_n, \hat{A}_{n}$ and
$B_n$ without  mentioning $x$.

First, we study the splitting case. We say a cycle {\it splits $l$ times} if
itself splits, its lifts split, the second generation of the
descendants split, ... , and the ($l-1$)-th generation of the
descendants split.

\begin{prop}\label{splits-p3}
Let $\sigma$ be a splitting cycle of $\phi_n$. \\
 \indent {\rm  1)} If $\hat{A}_n > B_n $,  then every lift splits $B_n-1$
 times then all  lifts at level $n + B_n$ grow.\\
 \indent {\rm  2)} If $A_n\leq B_n$ and $A_n< n$,  then there is one lift which behaves the same as
 $\sigma$ (i.e., this lift splits and $A_{n+1}\leq B_{n+1}$, $A_{n+1}< {n+1}$) and other lifts split $A_n-1$ times then  all  lifts at level $n + A_n$ grow.\\
 \indent {\rm  3)} If $B_n \geq n$ and $A_n \geq n$, then all lifts split at least $n-1$
 times.
\end{prop}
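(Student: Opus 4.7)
The plan is to specialize Lemma~\ref{anbn} to the splitting setting, where every lift has length $k$ (so $r=1$). This gives, for each $y\in\mathbb{X}_{\tilde\sigma}$,
\[
a_{n+1}(y)\equiv a_n(y)\ ({\rm mod}\ \pi^{n}),\qquad \pi\,b_{n+1}(y)\equiv b_n(y)\ ({\rm mod}\ \pi^{n}),
\]
so that, subject to the usual precision caveats, $A_{n+1}=v_\pi(a_n(y)-1)$ and $B_{n+1}=v_\pi(b_n(y))-1$ on each lift. The argument then reduces to tracking how $v_\pi(b_n(y))$ varies across the $p^f$ lifts in $\mathbb{X}_\sigma$, which is controlled by equation~(\ref{bnne}).

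In Case (1), the hypothesis $B_n<\hat{A}_n$ together with (\ref{bnne}) (or equivalently Lemma~\ref{lem1}(iv)) forces $v_\pi(b_n(y))=B_n$ uniformly on $\mathbb{X}_\sigma$. Consequently every lift satisfies $B_{n+1}=B_n-1$ and $\hat{A}_{n+1}\geq\hat{A}_n>B_{n+1}$, and is thus again a Case (1) configuration with $B$ diminished by one. Iterating $B_n-1$ more times brings every descendant to level $n+B_n-1$ with $B=1$; one further lift pushes $B$ to $0$, which is the growth regime of Section~\ref{preliminary}, located at level $n+B_n$.

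In Case (2), write $a_n(x_0)-1=u\pi^{A_n}$ with $u$ a unit in $\O$, and index the $p^f$ lifts by $t\in\O/\pi\O$ via $\tilde\sigma_t\ni x_0+\pi^n t$. Formula~(\ref{bnne}) then gives
\[
b_n(x_0+\pi^n t)\equiv b_n(x_0)+ut\pi^{A_n}\ ({\rm mod}\ \pi^{n}).
\]
Because $v_\pi(b_n(x_0))=B_n\geq A_n$, a unique residue class $t_0\in\O/\pi\O$ makes the right-hand side vanish modulo $\pi^{A_n+1}$. The corresponding special lift has $A_{n+1}=A_n$ and $B_{n+1}\geq A_n$, which returns it to Case (2); the remaining $p^f-1$ non-special lifts have $B_{n+1}=A_n-1<A_{n+1}$, i.e., a Case (1) configuration with $B=A_n-1$. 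Feeding these into Case (1) yields $A_n-2$ further splits, and together with the initial split taking the non-special lift at level $n+1$ to level $n+2$ this gives the stated $A_n-1$ splits before growth at level $n+A_n$.

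In Case (3), I would prove by induction on $l\in\{0,\dots,n-1\}$ that every descendant cycle at level $n+l$ satisfies $A_{n+l}\geq n$ and $B_{n+l}\geq n-l$; the inductive step follows immediately from Lemma~\ref{anbn} with $r=1$ and the strong triangle inequality. Since both $A_{n+l}$ and $B_{n+l}$ remain at least $1$ throughout this range, every such descendant splits, so the lifts at level $n+1$ undergo at least $n-1$ rounds of splitting. The principal technical difficulty across all three cases is precision bookkeeping: the identities in Lemma~\ref{anbn} hold only modulo $\pi^{n}$, so one must take $\min$'s with $n$ whenever converting congruences into $v_\pi$-valuations, and it is precisely this precision loss that limits the induction in Case (3) to $l\leq n-1$.
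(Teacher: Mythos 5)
Your proposal is correct and follows essentially the same route as the paper's proof: specialize Lemma \ref{anbn} to $r=1$, use formula (\ref{bnne}) (equivalently Lemma \ref{lem1}) to control how $v_\pi(b_n)$ varies over $\mathbb{X}_\sigma$ and to single out the special lift in case 2, and then iterate/induct, with the same precision caveats modulo $\pi^n$. Your explicit identification of the special lift via the unique residue $t_0$ and the careful split-counting are just slightly more detailed versions of the paper's argument.
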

\begin{proof}
Taking $r=1 $ in Lemma \ref{anbn}, we have
 \begin{eqnarray}\label{1}
 a_{n+1}(x)-1 \equiv a_n(x)-1  \quad ({\rm mod} \ \pi^{n})
\end{eqnarray}
and
\begin{eqnarray}\label{2}
   \pi b_{n+1}(x) \equiv b_n(x)  \quad ({\rm mod} \ \pi^{n}).
\end{eqnarray}
 \indent {\rm  1)} Since  $\hat{A}_n > B_n $, the assertion {\rm (ii)} of Lemma \ref{lem1}
 says that  $$b_{n}(x+{\pi}^n t)\equiv b_n(x)  \quad ({\rm mod} \ {\pi}^{\hat{A}_n})$$  for all $t\in \O$.
 Replacing $x$ with $x+{\pi}^n t$ in the equalities (\ref{1}) and (\ref{2}), we have $B_{n+1}(x)=B_{n}(x)-1$ and  $\hat{A}_{n+1}(x)\geq \hat{A}_{n}(x) \geq B_{n+1}(x)$ for all $x\in \X_\sigma$.
 So the lifts of $\sigma$ split. By induction, the lifts will split $B_{n}-1$ times and $B_{n+B_{n}}=0$. This means that $b_{n+B_{n}}(x)\neq 0  ~({\rm mod} \ \pi)$ for all $x\in \X_{\sigma}$. So all the descendants of $\sigma$ at level  $n + B_n$ grow.\\
  \indent {\rm  2)} Since  $A_n\leq B_n$ and $A_n< n$, the equality (\ref{bnne}) implies that there exists a lift $\hat{\sigma}$ such that  $B_n(x)>B_n(y)= A_n$
  for all $x\in \X_{\hat{\sigma}}$ and $y\in \X_{\sigma}\setminus \X_{\hat{\sigma}}$. By equalities (\ref{1}) and (\ref{2}), we have $B_{n+1}(x)\geq A_{n+1}(x)$ for all $x\in \X_{\hat{\sigma}} $ and $B_{n+1}(y)= A_{n}(y)-1 <A_{n+1}(y)$ for all $y\in \X_{\sigma}\setminus \X_{\hat{\sigma}}$. So the lift $\hat{\sigma}$  behaves the same as $\sigma$. By the proved assertion 1), the other lifts split $A_n-1$ times then  all  lifts at level $n + A_n$ grow. \\
  \indent {\rm  3)} Since   $B_n \geq n$ and $A_n \geq n$, the equality (\ref{bnne})
 implies  that  $$b_{n}(x)\equiv 0  \quad ({\rm mod} \ {\pi}^{n})$$  for all $x\in \X_{\sigma}$. By the equality (\ref{2}), $B_{n+1}(x)\geq n-1$, so all the lifts split. By induction, all the lifts split at least $n-1$ times.
\end{proof}


Now we study the growing case. We begin with a lemma and a proposition.
\begin{lem}\label{4.6}
Let $\sigma$ be a growing cycle and let  $\tsig $ be a lift of $\sigma$. If $n> \val(1 + a_n(x)+ \cdots + a_n^{p-1}(x))$ for some
$x\in \mathbb{X}_\sigma$, then $n> \val(1 + a_n(y)+ \cdots + a_n^{p-1}(y))$ for all $y\in \mathbb{X}_\sigma$.
\end{lem}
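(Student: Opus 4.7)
The plan is to reduce this to the ultrametric triangle inequality together with the fact, already established in Lemma \ref{lem1}, that the function $a_n(\cdot)$ is constant modulo $\pi^{n}$ on the whole set $\mathbb{X}_\sigma$.

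More concretely, I would first invoke parts (i) and (ii) of Lemma \ref{lem1} to conclude
$$a_n(x) \equiv a_n(y) \pmod{\pi^n} \qquad \text{for all } x,y \in \mathbb{X}_\sigma.$$
Since $\mathcal{O}_K$ is a ring and raising to a fixed power and summing are continuous for the $\pi$-adic topology, this congruence upgrades term by term to
$$S(x) := \sum_{j=0}^{p-1} a_n^{\,j}(x) \equiv \sum_{j=0}^{p-1} a_n^{\,j}(y) =: S(y) \pmod{\pi^{n}}.$$

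Now suppose $v_\pi(S(x)) < n$ for some $x\in \mathbb{X}_\sigma$. For any other $y\in \mathbb{X}_\sigma$, write $S(y) = S(x) + (S(y)-S(x))$. The second summand has valuation $\ge n > v_\pi(S(x))$, so by the strong triangle inequality
$$v_\pi(S(y)) \;=\; \min\{v_\pi(S(x)),\, v_\pi(S(y)-S(x))\} \;=\; v_\pi(S(x)) \;<\; n,$$
which is exactly the desired conclusion.

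There is essentially no obstacle here; the lemma is a clean corollary of the ``$a_n$ is constant mod $\pi^n$ on $\mathbb{X}_\sigma$'' statement from Lemma \ref{lem1} combined with the ultrametric principle that strict inequality $v_\pi(c) < n$ together with $c \equiv c' \pmod{\pi^n}$ forces $v_\pi(c') = v_\pi(c)$. The only thing worth double-checking is that one really uses the congruence modulo $\pi^{n}$ (and not some smaller modulus), but this is exactly what parts (i)--(ii) of Lemma \ref{lem1} deliver, so the argument goes through without further work.
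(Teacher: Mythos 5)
Your proof is correct and follows essentially the same route as the paper: both arguments rest on Lemma \ref{lem1}'s fact that $a_n$ is constant modulo $\pi^n$ on $\mathbb{X}_\sigma$, transferred to the sum $1+a_n+\cdots+a_n^{p-1}$ and combined with the strong triangle inequality. The only difference is cosmetic: the paper splits into the cases $A_n(x)\geq n$ and $A_n(x)<n$ (computing the valuation to be $e$ in the first case), whereas your single ultrametric argument covers both at once.
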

\begin{proof}
If $A_n(x)\geq n$, then $a_{n}(x) \equiv 1  \ ({\rm mod} \ \pi^{n})$.
The condition $n> \val(1 + a_n(x)+ \cdots + a_n^{p-1}(x))$ implies  $n>e$.
According to Lemma \ref{anbn}, we have $a_{n}(y) \equiv 1  \ ({\rm mod} \ \pi^{n})$ for all $y\in \mathbb{X}_\sigma$.
Thus $n>\val(1 + a_n(y)+ \cdots + a_n^{p-1}(y))=e$.

If $A_n(x)<n$, then the condition  $n> \val(1 + a_n(x)+ \cdots + a_n^{p-1}(x))$ implies that
$\val(1 + a_n(x)+ \cdots + a_n^{p-1}(x))=\val(1 + a_n(y)+ \cdots + a_n^{p-1}(y))$  for all $y\in \mathbb{X}_\sigma$. So $n>\val(1 + a_n(y)+ \cdots + a_n^{p-1}(y))$.

\end{proof}

\begin{prop}\label{grow-2}
 Let $\sigma$ be a growing cycle and  $\tsig $ be a lift of $\sigma$.
Then if $n> \val(1 + a_n+ \cdots + a_n^{p-1})$, we have
 $\tsig $ splits $\val(1 + a_n+ \cdots + a_n^{p-1})-1$ times then all the descendants grow.
 \end{prop}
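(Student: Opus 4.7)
The strategy is to apply Lemma \ref{anbn} with $r=p$ (since a growing cycle of length $k$ lifts to cycles of length $pk$), extract the exact valuations $A_{n+1}$ and $B_{n+1}$ from the hypothesis $n>m$, where $m:=\val(1+a_n+\cdots+a_n^{p-1})$, and then reduce the remaining splittings to Proposition \ref{splits-p3}(1). Lemma \ref{4.6} guarantees that $m$ is independent of the chosen basepoint in $\mathbb{X}_\sigma$, so the quantities below are well-defined on $\tilde{\sigma}$.

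First I would plug $r=p$ into Lemma \ref{anbn} to get $a_{n+1}(x)\equiv a_n^p(x)\ ({\rm mod}\ \pi^n)$ and $\pi b_{n+1}(x)\equiv b_n(x)\bigl(1+a_n(x)+\cdots+a_n^{p-1}(x)\bigr)\ ({\rm mod}\ \pi^n)$. Because $\sigma$ grows, $\val(b_n)=0$ and $A_n=\val(a_n-1)\geq 1$. The hypothesis $n>m$ is exactly what is needed to read off exact valuations modulo $\pi^n$: from the second congruence we conclude $B_{n+1}=m-1$, and using $a_n^p-1=(a_n-1)(1+a_n+\cdots+a_n^{p-1})$ in the first we conclude $\hat{A}_{n+1}\geq\min(A_n+m,\,n)$.

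Next I would classify the nature of $\tilde{\sigma}$ at level $n+1$. Since $a_n\equiv 1\ ({\rm mod}\ \pi)$ we have $a_{n+1}\equiv 1\ ({\rm mod}\ \pi)$. If $m=1$, then $B_{n+1}=0$, so $b_{n+1}\not\equiv 0\ ({\rm mod}\ \pi)$ and $\tilde{\sigma}$ itself grows; this is the boundary case where $\tilde{\sigma}$ splits $m-1=0$ times and is already a ``growing descendant.'' If $m\geq 2$, then $B_{n+1}=m-1\geq 1$, so $b_{n+1}\equiv 0\ ({\rm mod}\ \pi)$ and $\tilde{\sigma}$ splits.

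In the remaining case $m\geq 2$, I would verify the hypothesis $\hat{A}_{n+1}>B_{n+1}$ of Proposition \ref{splits-p3}(1): this amounts to $\min(A_n+m,\,n)>m-1$, which follows from $A_n\geq 1$ and $n>m$. Applying Proposition \ref{splits-p3}(1) to the splitting cycle $\tilde{\sigma}$, every lift of $\tilde{\sigma}$ splits $B_{n+1}-1=m-2$ further generations and then all descendants at level $(n+1)+B_{n+1}=n+m$ grow. Counting $\tilde{\sigma}$ itself together with these $m-2$ subsequent generations yields $1+(m-2)=m-1$ splittings before growth, as claimed. The only delicate point I anticipate is to track the ``at most $n$'' truncation cleanly in the estimate $\hat{A}_{n+1}\geq\min(A_n+m,n)$; once this inequality is secured, the application of Proposition \ref{splits-p3}(1) is routine.
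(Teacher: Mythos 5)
Your proposal is correct and takes essentially the same route as the paper: set $r=p$ in Lemma \ref{anbn}, use the hypothesis $n>\val(1+a_n+\cdots+a_n^{p-1})=:m$ to read off $B_{n+1}=m-1$ and $A_{n+1}\geq \min\{A_n+m,n\}>B_{n+1}$, then conclude by Proposition \ref{splits-p3}, assertion 1). Your additional bookkeeping (the boundary case $m=1$, the explicit verification $\hat{A}_{n+1}>B_{n+1}$, and the count $1+(m-2)=m-1$) only makes explicit what the paper's proof leaves implicit.
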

 \begin{proof}
 Taking $r=p $ in Lemma \ref{anbn}, we have
 \begin{align*}
 a_{n+1}(x)-1 &\equiv a_n(x)^p-1  \quad ({\rm mod} \ \pi^{n}) \\
 &\equiv (a_n(x)-1)(1 + a_n(x)+ \cdots + a_n^{p-1}(x)) \quad ({\rm mod} \ \pi^{n}).
\end{align*}
So we have $A_{n+1}(x)\geq \min\{n,A_n(x)+\val(1 + a_n(x)+ \cdots + a_n^{p-1}(x))\}$.

Since $\sig $ grows, $b_{n}(x) \not\equiv 0\ ({\rm mod} \ \pi)
$. By Lemma \ref{anbn}, we have
\begin{align}\label{bn-rp}
   \pi b_{n+1}(x) &\equiv b_n(x)(1+a_n(x)+ \cdots +
a_n(x)^{p-1})  \quad ({\rm mod} \ \pi^{n}).
\end{align}
 Hence if $n> \val(1 + a_n(x)+ \cdots + a_n^{p-1}(x))$,
\[
B_{n+1}(x)=\val(b_{n+1}(x))=\val(1 + a_n(x)+ \cdots + a_n^{p-1}(x))-1.
\]

Therefore, by the assertion 1) of Proposition  \ref{splits-p3}, $\tsig $ splits $\val(1 + a_n+ \cdots + a_n^{p-1})-1$ times then all the descendants grow.
 \end{proof}

Lemma \ref{4.6} and Proposition \ref{grow-2} lead to the following proposition.
\begin{prop}\label{grow-3}
  Suppose  $ n\geq e+1$. If $\sigma$ grows and $A_n(x)>e/(p-1)$ for some $x\in \sigma$, then all the $p^{f-1}$ lifts of $\sigma$ split $e-1 $ times and then all the descendants at level $n+e$ grow.
\end{prop}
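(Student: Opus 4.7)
The plan is to reduce this proposition directly to Proposition \ref{grow-2}, whose conclusion matches what we want if we can show that $v_{\pi}(1+a_n+\cdots+a_n^{p-1})=e$ under the hypothesis $A_n(x)>e/(p-1)$. Once this identity is established, the assumption $n\geq e+1$ immediately gives $n>v_{\pi}(1+a_n+\cdots+a_n^{p-1})$, and Proposition \ref{grow-2} tells us each lift splits $e-1$ times, after which all descendants grow. The count of $p^{f-1}$ lifts of $\sigma$ is already recorded in the growing-case analysis (a).

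To compute the valuation, I would write $a_n=1+u$ with $v_{\pi}(u)=A_n$ and use the geometric-series identity
\[
1+a_n+\cdots+a_n^{p-1}=\frac{a_n^{p}-1}{a_n-1}=\frac{(1+u)^{p}-1}{u}.
\]
Expanding the numerator by the binomial theorem gives
\[
(1+u)^{p}-1=\sum_{j=1}^{p}\binom{p}{j}u^{j}.
\]
Since $v_{\pi}(p)=e$ and $v_{\pi}\!\left(\binom{p}{j}\right)=e$ for $1\leq j\leq p-1$ (because $\binom{p}{j}$ is divisible by $p$ exactly once as an integer), the term with index $j$ (for $1\leq j\leq p-1$) has valuation $e+jA_n$, while the $j=p$ term has valuation $pA_n$. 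The minimum over $1\leq j\leq p-1$ is achieved at $j=1$ and equals $e+A_n$. The hypothesis $A_n>e/(p-1)$ is exactly the inequality $pA_n>e+A_n$, which ensures the $u^{p}$ term is strictly dominated by the $pu$ term. Hence $v_{\pi}((1+u)^{p}-1)=e+A_n$, and dividing by $u$ yields $v_{\pi}(1+a_n+\cdots+a_n^{p-1})=e$, as required.

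With this identity in hand, the hypothesis $n\geq e+1$ yields $n>e=v_{\pi}(1+a_n+\cdots+a_n^{p-1})$, so Proposition \ref{grow-2} applies to each lift $\tilde{\sigma}$ and gives exactly $e-1$ splittings followed by growth at level $n+e$. Applying this to each of the $p^{f-1}$ lifts guaranteed by case (a) of the local analysis completes the argument. I expect the only delicate step to be the valuation bookkeeping in the binomial expansion — specifically justifying $v_{\pi}(\binom{p}{j})=e$ for $1\leq j\leq p-1$ and making sure the sharp inequality $(p-1)A_n>e$ is used to rule out cancellation with the $u^{p}$ term; everything else is a direct citation of the preceding lemmas and the uniformity statements in Lemma \ref{lem1} (which make $A_n$ and the computation independent of the choice of representative $x\in\mathbb{X}_{\sigma}$).
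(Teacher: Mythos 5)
Your proposal is correct and takes essentially the same route as the paper: both reduce the statement to Proposition \ref{grow-2} after showing $v_{\pi}(1+a_n+\cdots+a_n^{p-1})=e$, which the paper establishes by the same binomial/geometric-series computation, merely organized as two cases ($a_n\equiv 1 \ ({\rm mod}\ \pi^{n})$ versus $a_n=1+\pi^{\gamma}\delta$ with $e/(p-1)<\gamma<n$) and with the uniformity in $x\in\mathbb{X}_{\sigma}$ supplied by Lemma \ref{4.6} rather than by your appeal to Lemma \ref{lem1}. The only cosmetic difference is that your unified expansion should note the trivial subcase $a_n(x)=1$ exactly (where the sum is just $p$), which the paper's first case absorbs automatically.
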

\begin{proof} Since $\sigma$ grows and $A_n(x)>e/(p-1)$, we can distinguish two cases : $a_n\equiv 1 \ ({\rm mod} \ \pi^{n})  $ and $a_n=1+\pi^{\gamma}\delta$ with $|\delta|_p=1$ and  $e/(p-1)<\gamma<n$.

If $a_n\equiv 1 \ ({\rm mod} \ \pi^{n})  $, then by $ n\geq e+1$, we have  $ 1+a_n+\cdots+a_n^{p-1}\equiv p \ ({\rm mod} \ \pi^{n})$.

If $a_n=1+\pi^{\gamma}\delta$ with $|\delta|_p=1$ and  $e/(p-1)<\gamma<n$, then we have
\begin{align*}
1+a_n+\cdots + a_n^{p-1} = \frac{a_n^p-1}{a_n-1}&=\frac{{p\choose
1}\pi^{\gamma}\delta+{p\choose
2}\pi^{2\gamma}\delta^2+\cdots+\pi^{\gamma p}\delta^{p}}{\pi^{\gamma}\delta}\\
&=p+{p\choose 2}\pi^{\gamma}\delta+\cdots \pi^{\gamma (p-1)}\delta^{p-1}\\
&\equiv p \quad ({\rm mod} \ \pi^{e+\gamma}).
\end{align*}

Thus both cases imply  $\val(1 + a_n+ \cdots + a_n^{p-1})=e$.
Since $n>e$, by Lemma \ref{4.6} and Proposition \ref{grow-2}, all the $p^{f-1}$ lifts of $\sigma$ split $e-1$ times then all
the descendants at level $n+e$ grow.
\end{proof}

Now we give a technique lemma which will be useful later.
\begin{lem}\label{order}
Let $\sigma$ be a growing $k$-cycle at level $n\geq e+1$ and set $\gamma=\hat{A}_{n}$.\\
 \indent {\rm 1)} If $\gamma> e/(p-1)$, then $\val(1 + a_n^{p^j}+ \cdots + a_n^{(p-1)p^j})=e $, for all $j\geq 0$.\\
 \indent {\rm 2)} If  $\gamma\leq  e/(p-1)$, then
 \[\val(1 + a_n^{p^j}+ \cdots + a_n^{(p-1)p^j})=\left\{
 \begin{array}{ll}
p^{j} \gamma(p-1)&  \text{ for } \ 0\leq j <\log_{p}\frac{e}{\gamma(p-1)};\\
  e&  \mbox{ for } \  j >\log_{p}\frac{e}{\gamma(p-1)}.
 \end{array}
 \right.
 \]
\end{lem}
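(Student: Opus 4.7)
My plan is to reduce both parts of the lemma to a single scalar recursion. Using the factorization
\[
1 + a_n^{p^j} + \cdots + a_n^{(p-1)p^j} = \frac{a_n^{p^{j+1}} - 1}{a_n^{p^j} - 1}
\]
(valid whenever $a_n^{p^j} \ne 1$; the trivial case $a_n = 1$ makes the sum equal to $p$, of valuation $e$, consistent with part~(1)), it suffices to track the sequence $v_j := \val(a_n^{p^j} - 1)$ and verify that $v_{j+1} - v_j$ matches the stated value.

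The heart of the argument is a binomial dichotomy: writing $a_n = 1 + u$ with $\val(u) = A_n$, the expansion
\[
(1+u)^p - 1 = pu + \binom{p}{2} u^2 + \cdots + \binom{p}{p-1} u^{p-1} + u^p
\]
together with the basic fact $\val\binom{p}{k} = e$ for $1 \le k \le p-1$ forces $\val((1+u)^p - 1) = \val(u) + e$ when $\val(u) > e/(p-1)$ and $\val((1+u)^p - 1) = p\,\val(u)$ when $\val(u) < e/(p-1)$. Iterating this rule on $a_n^{p^j} = 1 + u_j$ gives a clean recursion for $(v_j)$: linear with increment $e$ once we are above the threshold $e/(p-1)$, and multiplicative by $p$ while below.

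It then remains to apply the recursion to each case. In part~(1), since $n \ge e+1 > e/(p-1)$, the starting value $v_0 = \val(u)$ satisfies $v_0 \ge \gamma > e/(p-1)$ whether $A_n < n$ or $A_n \ge n$; the first regime is preserved, and $v_{j+1} - v_j = e$ for every $j$. In part~(2), the hypothesis $\gamma \le e/(p-1) < n$ forces $A_n = \gamma$, and the recursion starts in the second regime, producing $v_j = p^j \gamma$ as long as the sequence stays below $e/(p-1)$. In the range $j < \log_p(e/(\gamma(p-1)))$ both $v_j$ and $v_{j+1}$ are computed multiplicatively, giving $v_{j+1} - v_j = p^j \gamma(p-1)$; once $j > \log_p(e/(\gamma(p-1)))$, the sequence has crossed the threshold, so the first regime governs and $v_{j+1} - v_j = e$.

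The main subtlety I expect is the interplay between the actual value of $a_n$ and its residue modulo $\pi^n$: the valuations appearing in the statement must agree with what one computes from $a_n \bmod \pi^n$. This is automatic here because in part~(2) the values $p^j\gamma(p-1)$ in the specified range are bounded by $e$, and $e < n$ by the level hypothesis $n \ge e+1$; and in part~(1) the answer is simply $e < n$. The boundary value $\gamma = e/(p-1)$ falls outside both cases of the statement and thus needs no separate treatment.
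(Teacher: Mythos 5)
Your reduction is the same one the paper uses: rewrite the sum as $(a_n^{p^{j+1}}-1)/(a_n^{p^j}-1)$, track $v_j=\val(a_n^{p^j}-1)$, and apply the binomial dichotomy comparing $\val(u)$ with $e/(p-1)$. Part (1) and the sub-threshold range of part (2) are correct as you argue them, and your remark that all the relevant valuations are at most $e<n$, hence detectable from $a_n$ modulo $\pi^n$, is the right observation.

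The gap is in your last paragraph. Case (2) of the lemma reads $\gamma\le e/(p-1)$, not $\gamma<e/(p-1)$, so the boundary value $\gamma=e/(p-1)$ does \emph{not} fall outside the statement; and more generally, whenever $\log_p\frac{e}{\gamma(p-1)}$ is a non-negative integer $m$ (which includes $\gamma=e/(p-1)$, where $m=0$), your recursion lands exactly on the threshold: $v_m=p^m\gamma=e/(p-1)$. At that point neither branch of your dichotomy applies, since the terms $pu$ and $u^p$ have equal valuation and may cancel, so the phrase ``the sequence has crossed the threshold'' is not justified, and the claim $\val(1+a_n^{p^j}+\cdots+a_n^{(p-1)p^j})=e$ for $j>m$ is left unproved in precisely these cases. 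The fix is the boundary estimate you omitted: if $\val(u)=e/(p-1)$, then every term of $(1+u)^p-1$ has valuation at least $p\,\val(u)=e+e/(p-1)$, hence $v_{m+1}\ge e+e/(p-1)>e/(p-1)$ regardless of cancellation (possibly $v_{m+1}=\infty$); after this the first regime does govern and yields exactly $e$ for every $j>m$. Relatedly, your caveat about the quotient formula covers only $a_n=1$, but when $\gamma\le e/(p-1)$ the element $a_n$ could be a nontrivial $p$-power root of unity, so $a_n^{p^j}=1$ can occur with $a_n\ne 1$; there, as in your trivial case, one must evaluate the sum directly as $p$, of valuation $e$, which is again consistent with the statement.
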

\begin{proof}
\indent {\rm 1)}
By the definition of $\gamma$, we have $\gamma\leq n$.

If $\gamma =n$, then $a_n\equiv 1 \ ({\rm mod} \ \pi^{n})$ which implies  $a_n^{p^{j}}\equiv 1 \ ({\rm mod} \ \pi^{n})$.
Thus we have
$ 1+a_n^{p^j}+\cdots+a_n^{(p-1)p^{j}}\equiv p \ ({\rm mod} \ \pi^{n})$. Then by $n\geq e+1$, the assertion follows.

If $e/(p-1)<\gamma< n$, then by putting $a_{n}=1+\pi^{\gamma}\delta_0$ with $|\delta_0|_p=1$, we have
\begin{eqnarray}\label{anj}
a^{p^j}_n=1 + \sum_{i=1}^{p^j} {p^j \choose i}  (\pi^{\gamma}\delta_0)^{i}.
\end{eqnarray}
Since $\gamma> e/(p-1)$,  the equation (\ref{anj}) implies
$$\val(a_n^{p^j}-1)>e/(p-1).$$
Let $\gamma_j=\val(a_n^{p^j}-1)$ and
write $a^{p^j}_n=1+\pi^{\gamma_j}\delta_j$ with $|\delta_j|_p=1$.
Then we have
\begin{align}
1 + a_n^{p^j}+ \cdots + a_n^{(p-1)p^j} &=\frac{a_n^{p^{j+1}}-1}{a_n^{p^j}-1}\\
&=\frac{{p\choose
1}\pi^{\gamma_j}\delta_j+{p\choose
2}\pi^{2\gamma_j}\delta_j^2+\cdots+\pi^{p\gamma_j}\delta_j^p}{\pi^{\gamma_j}\delta_j}  \label{an11} \\
&=p+{p\choose 2}\pi^{\gamma_j}\delta_j+\cdots+ \pi^{(p-1)\gamma_j}\delta_j^{(p-1)}  \label{an12}\\
&\equiv p \quad ({\rm mod} \ \pi^{e+1}).
\end{align}
Thus, 
$$\val(1 + a_n^{p^j}+ \cdots + a_n^{(p-1)p^j})=e. $$

\indent {\rm 2)}  For $0\leq j <\log_{p}(e/((p-1)\gamma))$,  the formula (\ref{anj})  implies
 $$a^{p^j}_n \equiv 1+(\pi^{\gamma}\delta_0)^{p^{j}} \quad ({\rm mod} \ \pi^{e+1}).$$
 So $\val(a^{p^j}_n-1)= p^j\gamma<e/(p-1)$.

Let $\gamma_j=\val(a_n^{p^j}-1)$ and
write $a^{p^j}_n=1+\pi^{\gamma_j}\delta_j$ with $|\delta_j|_p=1$. By (\ref{an11}) and (\ref{an12}), we have
\begin{align*}
1 + a_n^{p^j}+ \cdots + a_n^{(p-1)p^j}&=\frac{a_n^{p^{j+1}}-1}{a_n^{p^j}-1}\\
&\equiv p +\pi^{(p-1)\gamma_j}\delta_j^{(p-1)} \quad ({\rm mod} \ \pi^{e+1}).
\end{align*}
Thus,
$$\val(1 + a_n^{p^j}+ \cdots + a_n^{(p-1)p^j})= \gamma p^{j}(p-1)<e.$$

 For $ j>\log_{2}(e/\gamma)$, by  (\ref{an11}) and (\ref{an12}), we have
\begin{align*}
1 + a_n^{p^j}+ \cdots + a_n^{(p-1)p^j}&=\frac{a_n^{p^{j+1}}-1}{a_n^{p^j}-1}\\
&\equiv p  \quad ({\rm mod} \ \pi^{e+1}).
\end{align*}
So, $$\val(1 + a_n^{p^j}+ \cdots + a_n^{(p-1)p^j})=e. $$
\end{proof}

Lemma \ref{order} gives the following proposition on the behavior of a growing cycle and all of its descendants.
\begin{prop}\label{Cor-grow} Let $\sigma$ be a  growing cycle at level $n$.
 If $n\geq e+1 $ and $\val(a_n-1)>e/(p-1)$, then any lift of $\sigma$ splits $e-1$ times then all the descendants grow; and
again the lifts of the growing cycles split $e-1 $ times then the
descendants grow. This process will go on forever.
\end{prop}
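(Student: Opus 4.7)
The plan is to iterate Proposition~\ref{grow-3}, and the essential task is to verify that its hypotheses are restored at each new generation of growing cycles.

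Applying Proposition~\ref{grow-3} once to $\sigma$ shows that its $p^{f-1}$ lifts split $e-1$ times and that every descendant $\sigma'$ at level $n+e$ is growing. Since $n+e\geq e+1$ is automatic, the only point that needs work is the inequality $\val(a_{n+e}-1)>e/(p-1)$. To establish it, I would track $A_m:=\val(a_m-1)$ along the path from level $n$ to level $n+e$ using Lemma~\ref{anbn}. For the initial growing step $n\to n+1$ (ratio $r=p$), the identity
\[
a_{n+1}-1\equiv (a_n-1)\bigl(1+a_n+\cdots+a_n^{p-1}\bigr)\pmod{\pi^n}
\]
combined with Lemma~\ref{order}(1), which gives $\val(1+a_n+\cdots+a_n^{p-1})=e$ because $A_n>e/(p-1)$ and $n\geq e+1$, yields
\[
A_{n+1}\;\geq\;\min(A_n+e,\,n)\;>\;e/(p-1).
\]
For each of the subsequent $e-1$ splittings $m\to m+1$ (ratio $r=1$), Lemma~\ref{anbn} gives $a_{m+1}\equiv a_m\pmod{\pi^m}$, hence $A_{m+1}\geq\min(A_m,m)>e/(p-1)$, since $m\geq n+1\geq e+2>e/(p-1)$. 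At level $n+e$ both hypotheses of Proposition~\ref{grow-3} are therefore satisfied by the new growing cycle $\sigma'$.

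With the hypotheses restored at level $n+e$, the same reasoning applies to $\sigma'$, producing a further tower of $e-1$ splittings and then growing at level $n+2e$, and induction on the generation number yields the infinite process claimed. The main obstacle is the careful bookkeeping of $A_m$ in the presence of potential saturation (when $A_m\geq m$), handled uniformly by working with $\hat{A}_m=\min(A_m,m)$ and checking that every intermediate level exceeds $e/(p-1)$, which frees the argument from having to distinguish the saturated and unsaturated regimes.
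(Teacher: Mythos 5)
Your proof is correct and follows essentially the route the paper intends: the paper states Proposition~\ref{Cor-grow} as an immediate consequence of Lemma~\ref{order} and Propositions~\ref{grow-2}--\ref{grow-3}, and your induction simply makes explicit the required restoration of the hypothesis $\val(a-1)>e/(p-1)$ at each new generation via the congruences of Lemma~\ref{anbn}. The bookkeeping with $\hat{A}_m=\min(A_m,m)$, using $A_{n+1}\geq\min(A_n+e,n)$ for the growing step and $A_{m+1}\geq\min(A_m,m)$ for the $e-1$ splitting steps together with $m\geq n\geq e+1>e/(p-1)$, is exactly what is needed, so the iteration of Proposition~\ref{grow-3} goes through.
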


%

Keep the assumption and notation of Proposition \ref{Cor-grow}. If $K$ is a non-trivial extension of $\mathbb{Q}_p$, i.e., $K\neq \mathbb{Q}_p$, then the clopen set
$\mathbb{X}_{\sigma}$ will be decomposed
into infinitely many, more precisely, uncountably many
minimal components.

In fact, if $K\neq \mathbb{Q}_p$, we have $e>1$, or $f>1$. In the case $e>1$, there are always $e-1$ times of splitting during the consecutive growings. Each splitting at some level decomposes each set at this level into $p^f$ parts invariant under $\phi$. Since we have infinitely many times of splitting, we will have uncountably many minimal parts.

In the case of $f>1$, since the lift of a growing cycle has length multiplied by $p$ only, the growing cycle becomes $p^{f-1}$ cycles each time. Thus each set at the growing level is decomposed into $p^{f-1}$ invariant components of $\phi$. By the same reason, we will also have uncountably many minimal parts.

Moreover, we can see each component is not a finite union of balls
any more. They are Cantor type sets. The clopen set $\X_{\sigma}$ with these growing and splitting behaviors is called of
\emph{type} $(k,e)$.

In general, for a given sequence of positive numbers
$\vec{E}=(E_j)_{j\geq 0}$, the clopen set $\mathbb{X}_{\sigma}$ is called of \emph{type} $(k, \vec{E})$
if it is a growing $k$-cycle at level $n$ and the lifts of this $k$-cycle split $E_0-1$
times. Furthermore, all the $E_0$-th generation of descendants grow
and then all the lifts split $E_1-1$ times. Again, the descendants
grow and the lifts of the growing descendants split $E_2-1$ times,
....

Let $(p_s)_{s\geq 1}$ be a sequence of positive integers with the
property that $p_{s}|p_{s+1}$ for all $s\geq 0$. Consider the
inverse limit
$$
      \mathbb{Z}_{(p_s)} : = \lim_{\leftarrow} \mathbb{Z}/p_s
      \mathbb{Z}.
$$
This is a profinite group, usually called an {\em odometer}. The
map $\tau: x \mapsto x+1$ is called the {\em adding machine} on
$\mathbb{Z}_{(p_s)}$.

It can be shown that if $\mathbb{X}_{\sigma}$ is of {type} $(k, \vec{E})$, then $(\mathbb{X}_{\sigma}, \phi)$ can be decomposed into uncountably many minimal subsystems. The power series $\phi$ restricted to each
minimal component is the adding machine with the sequence $(p_s)$
equal to
\[
(p_s)=(k, \ \underbrace{kp, \cdots, kp}_{E_0}, \ \underbrace{kp^2, \cdots,
kp^2}_{E_1}, \ \underbrace{kp^3, \cdots, kp^3}_{E_2}, \ \cdots).
\]

We remark that the type $(k,e)$ is nothing but the type $(k, \vec{E})$ with $\vec{E}=(e,e,\dots)$.

As a consequence of Lemma \ref{order}, we  have the following proposition.
\begin{prop}\label{cor9}
Let $\sigma$ be a growing $k$-cycle at level $n\geq e+1$ and set $\gamma=\hat{A}_n$.\\
\indent {\rm 1)} If $\gamma> e/(p-1)$, then the clopen set
 $\mathbb{X}_\sigma$ is of type $(k, e)$.\\
\indent {\rm 2)} If   $\gamma\leq  e/(p-1)$  and $\log_{p}\frac{e}{\gamma(p-1)}$ is not an integer, then the clopen set
 $\mathbb{X}_{\sigma}$ is of type $(k, \vec{E})$,
with
$$
E_j=\left\{
      \begin{array}{ll}
        p^{j} \gamma(p-1), & \hbox{if $0\leq j<\log_{p}\frac{e}{\gamma(p-1)}$;} \\
        e, & \hbox{if $j>\log_{p}\frac{e}{\gamma(p-1)}$.}
      \end{array}
    \right.$$
\indent {\rm 3)} If $\gamma\leq  e/(p-1)$  and $\log_{p}\frac{e}{\gamma(p-1)}$ is an integer, then all the lifts of $\sigma$
split $\gamma(p-1)-1$ times then all the descendants grow; and
again the lifts of the growing cycles split $\gamma(p-1)p^j-1 $ times then the
descendants grow; ..., the lifts of the growing cycles (at level $n-\gamma+e/(p-1)$ ) split at least $e-1$ times. 
\end{prop}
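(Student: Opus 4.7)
The strategy is to iterate Proposition \ref{grow-2}, feeding in the valuation computations from Lemma \ref{order} at each step. The key observation is how the multiplier transforms between successive growing levels: a splitting step preserves $a_n$ modulo $\pi^n$, while a growing step (taking $r=p$ in Lemma \ref{anbn}) replaces $a_n$ by $a_n^p$ modulo $\pi^n$. Hence the multipliers at successive growing levels of the descendant tree are $a_n,\, a_n^p,\, a_n^{p^2},\dots$, and the number of splits between the $j$th and $(j{+}1)$th growings is exactly $\val(1+a_n^{p^j}+\cdots+a_n^{(p-1)p^j})-1$, a quantity computed by Lemma \ref{order}.

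For part (1), this is an immediate induction on the number of growing stages using Proposition \ref{Cor-grow}. If $\gamma=\val(a_n-1)>e/(p-1)$, then the binomial expansion yields $\val(a_n^p-1)=e+\gamma$, which remains strictly greater than $e/(p-1)$. Hence the hypothesis of Proposition \ref{Cor-grow} persists at each new growing level (all of which are at least $n\geq e+1$), so $E_j=e$ at every stage, i.e., $\mathbb{X}_\sigma$ is of type $(k,e)$.

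For part (2), I would apply Lemma \ref{order}(2) at each stage and verify the hypothesis of Proposition \ref{grow-2}. At the $j$th growing level, which lies at level $n+\sum_{i<j}E_i$, the valuation $E_j$ equals $p^j\gamma(p-1)$ for $j<J$ (where $J:=\log_p\frac{e}{\gamma(p-1)}$) and equals $e$ for $j>J$; in either regime $E_j\leq e\leq n-1$, so the level strictly exceeds $E_j$ and Proposition \ref{grow-2} applies. Iterating produces type $(k,\vec E)$ with the stated $E_j$.

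The main obstacle is part (3), where $J$ is an integer. At the critical stage $j=J$ one has $\gamma_J:=\val(a_n^{p^J}-1)=p^J\gamma=e/(p-1)$, which is precisely the boundary where the two dominant terms in the relevant expansion, namely $p$ and $\pi^{(p-1)\gamma_J}\delta_J^{p-1}$, both have valuation $e$ and may cancel, while all other terms have strictly larger valuation. Thus Lemma \ref{order} only yields $\val(1+a_n^{p^J}+\cdots+a_n^{(p-1)p^J})\geq e$, and the most one can assert is at least $e-1$ splittings at level $n+\sum_{i<J}E_i$. Since $\sum_{j=0}^{J-1}p^j\gamma(p-1)=\gamma(p^J-1)=e/(p-1)-\gamma$, this critical level is exactly $n-\gamma+e/(p-1)$, matching the statement. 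The stages $j<J$ behave as in part (2), giving the splits of counts $\gamma(p-1),\, p\gamma(p-1),\, \ldots,\, p^{J-1}\gamma(p-1)$ claimed there.
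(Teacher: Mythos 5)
Your proposal is correct and follows essentially the same route as the paper: the paper derives this proposition directly from Lemma \ref{order} combined with Proposition \ref{grow-2} (and Proposition \ref{Cor-grow} for part (1)), the number of splits between consecutive growings being $\val(1+a_n^{p^j}+\cdots+a_n^{(p-1)p^j})-1$. Your treatment of the critical index in part (3), where the terms $p$ and $\pi^{(p-1)\gamma_J}\delta_J^{p-1}$ both have valuation $e$ and may cancel so that only ``at least $e-1$'' splittings can be claimed at level $n-\gamma+e/(p-1)$, is exactly the reason the paper's statement is weaker in that case.
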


\bigskip
\section{Minimal decomposition}\label{S-proof}

In this section, we show how we can do minimal decomposition for a convergent series $\phi\in \mathcal{O}_K\langle x\rangle$.

If a cycle grows tails, it will produce  an attracting periodic orbit with an
attracting basin. If a splitting cycle always has a splitting lift with the same length,
then it will produce a periodic orbit of $\phi\in \mathcal{O}_K\langle x\rangle.$
For a growing  cycle $\sigma$ at level $n\geq e+1$, if  $\hat{A}_n>e/(p-1)$ and $K\neq \Qp$, then it will produce uncountably many  minimal
components of $\phi$.

We shall describe these assertions more precisely.

Let $\sigma=(x_1,\dots,x_k)$ be a cycle of $\phi_n$. Recall that in
this case $\sigma $ is called a $k$-cycle at level $n$.
There are  four special situations for the dynamical system $\phi :
\mathbb{X}_\sigma \to \mathbb{X}_\sigma$.\\

\indent (S1) Suppose $\sigma $ grows tails. Then $\phi$ admits a
$k$-periodic orbit with one periodic point in each ball
$\mathbb{X}_i=\{x \in \O: x\equiv x_i \ ({\rm mod} \ \pi^n) \} (1\leq i \leq k)$, and all other points in
$\mathbb{X}_\sigma$ are attracted into this orbit. In this situation, if
$x$ is a point in the $k$-periodic orbit, then $|(\phi^{\circ k})'(x)|_p<1 $
since $(\phi^{\circ k})'(x)=a_m(x) \equiv 0 \ ({\rm mod} \ \pi) $ for all
$m\geq n $. The periodic orbit $(x, \phi(x), \cdots, \phi^{\circ(k-1)}(x))$ is
then attractive.\\

\indent (S2) Suppose $\sigma$ grows at level $n\geq e+1 $ and  $\hat{A}_n>e/(p-1)$. By Proposition \ref{Cor-grow},  any lift of the  growing cycle $\sigma$, splits $e-1$ times then all the descendants grow; and
again the lifts of the growing cycles split $e-1 $ times then the
descendants grow. This process will go on forever. The dynamical system $(\mathbb{X}_\sigma,\phi)$  which consists of uncountably many minimal subsystem, is of type $(k,e)$.\\

\indent (S3) Suppose $\sigma$ splits and there is a splitting lift with the same length as $\sigma$
at level $n+1$, and then for this lift there is still a splitting lift with the same length, and go on. Then there is a $k$-periodic orbit
with one periodic point in each $\mathbb{X}_i (1\leq i \leq
k)$. We say that $\sigma $ is a starting splitting cycle at level
$n$. In this situation, if $x$ is a point in the $k$-periodic orbit,
then $|(\phi^{\circ k})'(x)|_p=1 $ since $(\phi^{\circ k})'(x)=a_m(x) \equiv 1 \ ({\rm mod} \
\pi) $ for all $m\geq n $. Thus the periodic orbit $(x, \phi(x),
\cdots, \phi^{\circ (k-1)}(x))$ is indifferent.\\

\indent (S4) Suppose  $\sigma$ partially splits.  Then by Lemma \ref{anbn}, there is
one lift of length $k$ which still partially splits like $\sigma $.
Thus there is a $k$-periodic orbit with one periodic point in each
$\mathbb{X}_i\ (1\leq i \leq k)$. In this situation, if $x$
is a point in the $k$-periodic orbit formed above, then
$|(\phi^{\circ k})'(x)|_p=1 $ since $(\phi^{\circ k})'(x)=a_m(x) \not\equiv 0,1 \  ({\rm
mod} \ \pi) $ for all $m\geq n $. Hence, the periodic orbit $(x,
\phi(x), \cdots, \phi^{\circ (k-1)}(x))$ is  indifferent.\\


Before proving Theorem \ref{thm-decomposition},  we first show that there are only finitely many possible periods of periodic orbits.
Let $\sigma=(x_1,\dots,x_k)$ be a    cycle  at level $n$. Recall that  $\hat{A}_{n}(x)=\min\{A_{n}(x),n\}$ does not depend on the choice of $x\in \mathbb{X}_\sigma $. So, without misunderstanding, we will not mention $x$ in $\hat{A}_n(x)$.
\begin{prop}\label{cyclelengthgrow}
Let $\sigma=(x_1,\dots,x_k)$ be a  growing  cycle  at level $n\geq e+1$. \\
 \indent {\rm 1)} If $\log_p \frac{e}{(p-1)\hat{A}_n}$ is  not a nonnegative  integer, then there is no periodic point in $\mathbb{X}_\sigma.$\\
\indent {\rm 2)} If $\log_p \frac{e}{(p-1)\hat{A}_n}$ is   a nonnegative  integer,
 then the possible  periods of the periodic orbits in $\mathbb{X}_\sigma$ must be  
 $\frac{kp e}{(p-1)\hat{A}_n}$.
\end{prop}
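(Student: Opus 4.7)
\emph{Proof plan.}

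The plan is to track the length $k_N$ of the $\phi_N$-cycle containing a putative periodic point as $N \to \infty$, and combine this with the structural classification of Proposition \ref{cor9}.

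Suppose $x_0 \in \mathbb{X}_\sigma$ is $T$-periodic and let $k_N$ denote the $\phi_N$-cycle length of $x_0 \bmod \pi^N$. Since $\sigma$ is a $k$-cycle and $\phi^{\circ T}(x_0) = x_0$, we have $k \mid k_N \mid T$; the sequence $(k_N)$ is non-decreasing and bounded by $T$, so it stabilizes, with limit equal to $T$. Because each growing step multiplies the cycle length by $p$ while each splitting step preserves it, stabilization of $(k_N)$ is equivalent to having only finitely many growing steps along the branch of $x_0$. Setting $\gamma := \hat A_n$: when $\gamma > e/(p-1)$ the subsystem $\mathbb{X}_\sigma$ is of type $(k, e)$ by Proposition \ref{cor9}(1), and when $\gamma \leq e/(p-1)$ with $\log_p \frac{e}{(p-1)\gamma}$ not an integer, it is of type $(k, \vec E)$ by Proposition \ref{cor9}(2). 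In both situations every minimal component of $\mathbb{X}_\sigma$ is an adding-machine subsystem and so has no periodic points; this proves item (1).

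For item (2), set $m := \log_p \frac{e}{(p-1)\gamma} \in \mathbb{Z}_{\geq 0}$. Proposition \ref{cor9}(3) forces \emph{every} descendant of $\sigma$ to undergo exactly $m+1$ successive growings before the resulting cycles of length $kp^{m+1}$ begin to ``split at least $e-1$ times''; in particular $k_N$ must reach $kp^{m+1}$ on the branch of $x_0$. If a further growing were to occur on this branch, then by Lemma \ref{order} the new $\hat A$-value would still exceed $e/(p-1)$ (indeed $\val(a_n^{p^{m+1}} - 1) \geq pe/(p-1)$), so Proposition \ref{cor9}(1) would apply to the new growing cycle and identify its subsystem as type $(\,\cdot\,, e)$, which contains no periodic points---contradicting the existence of $x_0$. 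Hence $(k_N)$ stabilizes precisely at $kp^{m+1}$, and
\[
T \;=\; kp^{m+1} \;=\; kp \cdot \frac{e}{\gamma(p-1)} \;=\; \frac{kpe}{(p-1)\hat A_n}.
\]

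The subtle step is the inductive argument in item (2): once the cycle length has reached $kp^{m+1}$, any further growing must be shown to land back in a type-$(\,\cdot\,, e)$ regime rather than opening a new finite ``split--grow'' cascade that could leave room for a larger stabilized period. This is precisely what the $\val$-arithmetic of Lemma \ref{order} delivers, by forcing the new $\hat A$-value to remain above $e/(p-1)$, so that Proposition \ref{cor9}(1) takes over and rules out any stabilization beyond $kp^{m+1}$.
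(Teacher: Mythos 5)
Your proof is correct and takes essentially the same route as the paper's: part 1) follows from Proposition \ref{cor9}(1)--(2) since adding-machine minimal components contain no periodic points, and part 2) follows the forced split--grow cascade of Proposition \ref{cor9}(3) and Lemma \ref{order} until the cycle length reaches $kp^{m+1}=\frac{kpe}{(p-1)\hat{A}_n}$, then rules out any further growing because the new value of $\hat{A}$ exceeds $e/(p-1)$, putting that region in type $(\cdot,e)$ with no periodic points. The paper organizes this via the explicit descendant at level $n-\hat{A}_n+ep/(p-1)$ rather than your stabilizing length sequence $(k_N)$, but the underlying argument is the same.
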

\begin{proof}
 {\rm 1)} The conclusion  is  a direct consequence of the assertions {\rm 1)} and {\rm 2)} of Proposition \ref{cor9}.
 
 {\rm 2)} Let $\sigma_s$ be a cycle which is a descendant of $\sigma$
at level $s:=n-\hat{A}_n+ep/(p-1)$. By the third assertion of Proposition \ref{cor9}, $\sigma_s$ is a growing or splitting cycle of length $\frac{kpe}{(p-1)\hat{A}_n}$ with
$\hat{A}_s\geq \min\{ep/(p-1),n\}>e.$

If  the  cycle $\sigma_s$ grows, then there is no periodic point in the clopen set $\mathbb{X}_{\sigma_{s}} =\bigsqcup_{x\in \sigma}(x +\pi^s \O)$.

If the  cycle $\sigma_s$ splits, we first suppose that there is a splitting lift with the same length as $\sigma$ at level $n+1$, and then for this lift there is still a splitting lift with the same length, and go on.  Then there is a $\frac{kpe}{(p-1)\hat{A}_n} $-periodic orbit
with one periodic point in each $x+\pi^s\O \ (x\in\sigma_s)$. If it is not the case, let $\sigma_t$ be a growing  descendant of $\sigma_s$  at level $t>s$  with the same length as $\sigma_s$. Since $\hat{A}_s>e$, it follows  $\hat{A}_t\geq e$. So
the clopen set $\mathbb{X}_{\sigma_t}=\bigsqcup_{x\in \sigma_t} (x +\pi^t \O)$ is of type $(\frac{kpe}{(p-1)\hat{A}_n},e)$. Thus there is no periodic point in $\mathbb{X}_{\sigma_t}$.

\end{proof}

\begin{prop}\label{cyclelengthsplit}
Let $\sigma=(x_1,\dots,x_k)$ be a  splitting  cycle  at level $n\geq e+1$. Then the possible  periods of periodic orbits in $\mathbb{X}_\sigma$ must be $k$ or
$\frac{kp e}{(p-1)\hat{A}_n}$ (when $\log_p \frac{e}{(p-1)\hat{A}_n}$ is  a nonnegative  integer).

\end{prop}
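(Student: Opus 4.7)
The plan is to reduce to Proposition \ref{cyclelengthgrow} by tracking the descendants of $\sigma$. Given a periodic orbit $O \subset \mathbb{X}_\sigma$ of period $P$, let $\sigma_m$ denote the descendant cycle of $\sigma$ at level $m \ge n$ that contains $O$. Its length divides $P$ and equals $P$ for all sufficiently large $m$, so it suffices to determine the possible lengths of descendant cycles of $\sigma$.

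First I would show that every descendant of $\sigma$ is again splitting or growing. Since $a_n(x) \equiv 1 \pmod{\pi}$ and Lemma \ref{anbn} gives $a_{n+1}(x) \equiv a_n(x)^r \pmod{\pi^n}$ for any lift of length $kr$ with $r \in \{1, p\}$, one has $a_{n+1} \equiv 1 \pmod{\pi}$ whether the lift splits ($r=1$) or grows ($r=p$). By induction every descendant has $a \equiv 1 \pmod{\pi}$, which rules out growing tails ($a \equiv 0$) and partial splitting ($a \not\equiv 0, 1$). Hence the length of $\sigma_m$ is always $kp^s$ for some $s \ge 0$.

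If $P = k$, the allowed value is reached. Otherwise $P = kp^s$ with $s \ge 1$; since splitting preserves length while growing multiplies it by $p$, there is a smallest level $m_1 \ge n$ at which $\sigma_{m_1}$ grows, and the earlier descendants $\sigma_n, \ldots, \sigma_{m_1-1}$ are splitting $k$-cycles. Thus $\sigma_{m_1}$ is a growing $k$-cycle at level $m_1 \ge e+1$, and Proposition \ref{cyclelengthgrow} applied to it forces $P = \frac{kpe}{(p-1)\hat{A}_{m_1}}$ with $\log_p\frac{e}{(p-1)\hat{A}_{m_1}}$ a nonnegative integer; otherwise $\mathbb{X}_{\sigma_{m_1}} \ni O$ would contain no periodic orbit, a contradiction.

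It remains to identify $\hat{A}_{m_1}$ with $\hat{A}_n$. Iterating Lemma \ref{anbn} with $r=1$ through the splitting phase gives $a_m(x) \equiv a_n(x) \pmod{\pi^n}$ for all $n \le m \le m_1$. If $A_n < n$, so $\hat{A}_n = A_n$, then $A_{m_1} = A_n$ and $\hat{A}_{m_1} = A_n = \hat{A}_n$, yielding $P = \frac{kpe}{(p-1)\hat{A}_n}$ as desired. If instead $A_n \ge n$, then $\hat{A}_{m_1} \ge n \ge e+1 > e/(p-1)$, and Proposition \ref{cor9}~1) shows $\sigma_{m_1}$ is of type $(k,e)$, whose minimal components are infinite adding machines carrying no periodic orbit --- contradicting $P > k$. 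So only $P = k$ arises in this case, consistent with $\log_p\frac{e}{(p-1)n}$ being negative. The main obstacle is the bookkeeping required to verify that the growing cycle at $m_1$ has length exactly $k$ and that its $\hat{A}$ coincides with $\hat{A}_n$ in the relevant case; both are handled cleanly by the congruence $a_m \equiv a_n \pmod{\pi^n}$ along the splitting phase, while the boundary case $A_n \ge n$ is absorbed by the Cantor-type structure from Proposition \ref{cor9}~1).
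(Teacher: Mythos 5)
Your proof is correct and follows essentially the same route as the paper: period $k$ comes from an everlasting chain of splitting lifts of length $k$, and otherwise one passes to the first growing descendant of length $k$ and invokes Proposition \ref{cyclelengthgrow}. Your explicit tracking of the descendant chain of a given periodic orbit and the verification that $\hat{A}_{m_1}=\hat{A}_n$ (with the boundary case $A_n\geq n$ excluded via the type-$(k,e)$ structure) merely makes precise what the paper's proof leaves implicit.
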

\begin{proof}
Suppose first that there is a splitting lift with the same length as $\sigma$ at level $n+1$, and then for this lift there is still a splitting lift with the same length, and go on. Then there is a $k$-periodic orbit
with one periodic point in each $x_i+\pi^n\O \ (x_i\in\sigma)$.

If not, let $\sigma_s$ be a growing  descendant of $\sigma$  at level $s>n$  with the same length to $\sigma$. As in Proposition \ref{cyclelengthgrow}, we have two cases.

\indent {\rm 1)} If  $\log_p \frac{e}{(p-1)\hat{A}_n}$ is  not a nonnegative  integer, then neither is $\log_p \frac{e}{(p-1)\hat{A}_s}$. So there is no periodic point in the clopen set $\mathbb{X}_{\sigma_s} =\bigsqcup_{x\in \sigma_s}(x +\pi^s \O)$.\\
\indent {\rm 2)} If  $\log_p \frac{e}{(p-1)\hat{A}_n}$ is  a nonnegative  integer, then  so is $\log_p \frac{e}{(p-1)\hat{A}_s}$. By  Proposition \ref{cyclelengthgrow},  the possible  periods of periodic orbits in $\mathbb{X}_{\sigma_s}$ must be   $\frac{kp e}{(p-1)\hat{A}_n}$.
\end{proof}
An immediate  consequence of Propositions \ref{cyclelengthgrow} and \ref{cyclelengthsplit} is the following corollary.
\begin{cor}\label{possible-periods}
For each $\phi \in \mathcal{O}_K\langle x\rangle$, there are only finitely many possible periods of periodic orbits in $\O$.
\end{cor}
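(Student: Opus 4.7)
The plan is to reduce the question to the finite dynamics $\phi_{e+1}$ on the finite ring $\mathcal{O}_K/\pi^{e+1}\mathcal{O}_K$ and then invoke the case analysis already carried out in Section \ref{preliminary}. Every periodic point $x\in\mathcal{O}_K$ of $\phi$ of period $m$ reduces modulo $\pi^{e+1}$ to a periodic point of $\phi_{e+1}$ lying on some cycle $\sigma$ of length $k$, where necessarily $k\mid m$. Since $\mathcal{O}_K/\pi^{e+1}\mathcal{O}_K$ has cardinality $p^{f(e+1)}$, the map $\phi_{e+1}$ admits only finitely many cycles, so it suffices to bound, for each such cycle $\sigma$, the possible periods of periodic orbits contained in the clopen set $\mathbb{X}_\sigma$.

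Next I would classify each cycle $\sigma$ of $\phi_{e+1}$ according to the four behaviors described in Section \ref{preliminary}. If $\sigma$ grows tails then by (S1) the only periodic orbit in $\mathbb{X}_\sigma$ has period $k$; if $\sigma$ partially splits then (S4) again yields a unique $k$-periodic orbit. If $\sigma$ grows, then since we are already at level $n=e+1$, Proposition \ref{cyclelengthgrow} applies and tells us that either $\mathbb{X}_\sigma$ contains no periodic point at all, or the only admissible period is $kpe/((p-1)\hat{A}_{e+1})$. If $\sigma$ splits, Proposition \ref{cyclelengthsplit} restricts the possible periods to the two values $k$ and $kpe/((p-1)\hat{A}_{e+1})$ (the second one only when it is an integer). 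Thus each cycle at level $e+1$ contributes at most two possible periods.

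Taking the union over the finitely many cycles of $\phi_{e+1}$ gives a finite set containing every possible period of a periodic orbit of $\phi$ in $\mathcal{O}_K$, which is the desired conclusion. The argument is essentially a finite bookkeeping: the hard work has already been done in Propositions \ref{cyclelengthgrow} and \ref{cyclelengthsplit}, and the only point one must be slightly careful about is that the periodic point $x$ need not lie on a cycle of $\phi_{e+1}$ of length equal to its true period, but this is exactly the reason for expressing the bounds in terms of $k$ and $\hat{A}_{e+1}$ rather than of $m$ itself. I do not foresee any substantial obstacle.
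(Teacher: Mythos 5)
Your overall strategy---reduce to the finitely many cycles of $\phi_{e+1}$ on $\O/\pi^{e+1}\O$ and then invoke Propositions \ref{cyclelengthgrow} and \ref{cyclelengthsplit}---is essentially the paper's route, and your handling of the growing, splitting and grows-tails cases is fine. The genuine gap is in the partially splitting case. (S4) only asserts that the distinguished lift of length $k$ produces a $k$-periodic orbit; it does \emph{not} say this is the only periodic orbit in $\mathbb{X}_\sigma$. When $\sigma$ partially splits, at every level $m\geq n+1$ the lifts other than the distinguished one have length $k\ell$ (where $\ell\geq 2$ is the order of $a_n$ in $\mathbb{K}^*$) and they grow or split, and these sets can carry further periodic orbits whose periods are not $k$. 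A concrete example: $\phi(x)=\alpha x$ with $\alpha$ a root of unity of order $\ell>1$ dividing $p^f-1$. The fixed point $0$ gives a partially splitting $1$-cycle $\sigma$ at level $e+1$, yet every nonzero $x\in\pi^{e+1}\O$ is periodic of exact period $\ell\neq 1$, so $\mathbb{X}_\sigma$ contains (infinitely many) periodic orbits of period different from $k$. Hence your claim that this case ``again yields a unique $k$-periodic orbit,'' and your per-cycle count of at most two possible periods, are unjustified and in general false.

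The gap can be repaired along the lines of case (P4) in the paper's proof of Theorem \ref{thm-decomposition}: in the partially splitting case, any periodic orbit in $\mathbb{X}_\sigma$ other than the distinguished $k$-orbit lies in $\mathbb{X}_{\tilde\sigma}$ for some growing or splitting cycle $\tilde\sigma$ of length $k\ell$ at some level $m\geq n+1\geq e+1$, so Propositions \ref{cyclelengthgrow} and \ref{cyclelengthsplit} do apply to $\tilde\sigma$ and force the period to be $k\ell$ or $k\ell pe/\bigl((p-1)\hat{A}_m\bigr)$, the latter only when $\log_p\frac{e}{(p-1)\hat{A}_m}$ is a nonnegative integer, i.e.\ when the period equals $k\ell p^{j+1}$ with $p^{j}(p-1)\leq e$. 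Since $\hat{A}_m\geq 1$, these values range over a finite set independent of the level $m$, so even though such lifts occur at infinitely many levels, the set of possible periods contributed by $\mathbb{X}_\sigma$ remains finite, and the corollary follows. With this extra step added, your argument matches the paper's intended one.
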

Now we can prove our main theorem.
\begin{proof}[Proof of Theorem \ref{thm-decomposition}]
Let us first prove that there are only finitely many periodic points.   In fact, there are only finitely many possible lengths of periods by Corollary \ref{possible-periods}.
But periodic points are solutions
of the equations $\phi^{\circ q_i}(x) = x$ with $\{q_i\}$ being one of the finite possible lengths of periods. Since $\mathcal{O}_K\langle x\rangle$ is closed under composition of functions, it follows  $\phi^{\circ q_i} \in \mathcal{O}_K\langle x\rangle$.  Write \[\phi^{\circ q_i}(x)-x= \sum_{i\geq 0}c_i x^i\] and let $j$  be the largest integer such that $|c_j|_p= \max_{i\geq 0}|c_i|_p.$
By Proposition \ref{WPT}, there is a monic polynomial $g\in \O[x]$ of degree $j$ and a power series $h\in\mathcal{O}_K\langle x\rangle$ such that $\phi^{\circ q_i}(x)-x=g(x)h(x)$, and $h(x)\neq 0 $ for all $x\in \O$.  Since $g$ is a polynomial, the  equation $g(x)=0$ admits only a finite number of solutions. Hence, there are only a finite number of
periodic points.

Let us continue the proof.
We start from the level $e+1$. The space $\O$ is a union of $p^{(e+1)f}$
balls with radius $p^{-(e+1)/e}$. Each ball is identified with a point in
$\O/\pi^{e+1}\O$. The induced map $\phi_{e+1}$ on $\O/\pi^{e+1}\O$ admits some
cycles. The points outside the cycles are mapped into the cycles. The balls corresponding to the points outside the cycles will be put into the third part
$C$ in the decomposition. From now on, we will be concerned only with cycles at level
$n\ge e+1$.

Let  $\sigma=(x_1,\dots,x_k)$ be a cycle at level $n \geq
e+1$.
We distinguish four cases.\\
 \indent (P1) {\em $\sigma$ grows tails.}  Then by (S1), the clopen set $\mathbb{X}_{\sigma}
$ consists of a $k$-periodic orbit and other points are attracted by
this periodic orbit. So, $\mathbb{X}_{\sigma}$ contributes to the first part
$A$ and the third part $C$ in the decomposition.

\indent (P2) {\em $\sigma $ grows}.  We shall apply Lemma \ref{order} and Proposition \ref{cor9}.
\begin{itemize}
  \item[(1)] If $\hat{A}_n> e/(p-1)$, then the clopen set $\mathbb{X}_{\sigma}$ is of type $(k,e)$. So, $\mathbb{X}_{\sigma}\subset B$ consists some minimal components of the part $B$ in the decomposition.
  \item[(2)] If $\hat{A}_n \leq e/(p-1)$ and $\log_p\frac{p}{(p-1)\hat{A}_n}$ is not an integer,  then the clopen set
 $\mathbb{X}_{\sigma}
=\bigsqcup_{i=1}^k (x_i +\pi^n \O)$ is of type $(k, \vec{E})$,
with
$$
E_j=\left\{
      \begin{array}{ll}
        p^{j} (p-1)\hat{A}_n, & \hbox{if $0\leq j<\log_{p}\frac{e}{(p-1)\hat{A}_n}$;} \\
        e, & \hbox{if $j>\log_{p}\frac{e}{(p-1)\hat{A}_n}$.}
      \end{array}
    \right.$$
Consider the level $s:=n+\hat{A}_n p^{\lfloor \log_{p}\frac{e}{(p-1)\hat{A}_n}\rfloor+1}-\hat{A}_n.$ Let $\sigma_s$
be any cycle at level $s$ which is a descendant of $\sigma$ at level $n$. Then $\sigma_s$ is a growing cycle of length $\ell:=kp^{\lfloor \log_{p}\frac{e}{(p-1)\hat{A}_n}\rfloor+1}$. The clopen set  $\mathbb{X}_{\sigma_s}
=\bigsqcup_{x\in \sigma_s} (x +\pi^s \O)$ is then of type $(\ell, e)$. So  $\mathbb{X}_{\sigma_s}\subset B$ and $\mathbb{X}_\sigma \subset B$.
  \item[(3)]  If $\hat{A}_n \leq e/(p-1)$ and $\log_p\frac{p}{(p-1)\hat{A}_n}$ is an integer, then we need to study the descendants of $\sigma$ at level $s=n-\hat{A}_n+ep/(p-1)$ separately. Let $\sigma_s$
be such a descendant. Then $\sigma_s$ is a growing or splitting cycle of length $\ell:=\frac{kpe}{(p-1)\hat{A}_n}$ with $\hat{A}_s\geq \min\{ep/(p-1),n\}>e$. If $\sigma_s$ grows, then we go to the case of (1). If $\sigma_s$ splits, then we go to  (P3).
\end{itemize}

\indent (P3) {\em  $\sigma$ splits}.  We shall apply
Proposition \ref{splits-p3}.
\begin{itemize}
  \item[(1)] If $\sigma$ belongs to Case 1 of Proposition
\ref{splits-p3}, then after finitely many times of splitting, all
lifts grow. So all the lifts are in some case of (P2) determined by $\hat{A}_n$.

\item[(2)] If $\sigma$ belongs to Case 2 of Proposition
\ref{splits-p3}, then there is one
lift of $\sigma$ sharing the property (S3), and there is a periodic orbit with period $k$. Each lift
different from the cycle deriving the periodic orbit (at any level
$m \geq n+1 $)  find itself  in some case of (P2) (determined by $\hat{A}_n)$ after finitely many times of lifting.

\item[(3)] If $\sigma$ belongs to
Case 3 of Proposition
\ref{splits-p3}, then $\sigma $ splits into $p^{nf} $ cycles at level $2n$.
These cycles at level $2n$ may continue this procedure of analysis
of (P3). But this procedure can not continue infinitely, because
there are only a finite number of periodic points. So, all these
cycles may continue to split but they must end with their lifts
belonging either to Case 1 or Case 2 in Proposition \ref{splits-p3}.
So, $\mathbb{X}_\sigma$ contributes to both $A$ and $B$.
\end{itemize}

\indent (P4) {\em  $\sigma$ partially splits}. In this case, $\sigma$ is in
the situation (S4).  Thus there comes out a periodic orbit. For $m \geq n $,  we suppose
that
$\sigma_{m+1} $ is the lift of $\sigma$ at
level $m+1$ deriving the periodic orbit.  Then the other lifts different from
$\sigma_{m+1}$ will grow or split. So we  go to (P2) or (P3) for these cycles
at level $m+1$.

For the case (3) of (P2), a splitting  lift $\sigma_s$ of $\sigma$ at level $s=n-\hat{A}_n+ep/(p-1)$ must have $\hat{A}_s>e$. By the process of  (P3),  the growing  descendants of $\sigma_s$
 are in the case (1) of (P2), so the procedures will stop.
    Thus, all the above  procedures will stop and  we get the decomposition.
\end{proof}

\bigskip
\section{Affine polynomial dynamics}\label{S-affine}

  In this section, we would like to study the minimal
decomposition of the affine polynomial dynamical system $(K, F)$ with $F(x)=\a x+\b, \ (\a, \b\in K, \a\neq 0, (\a,\b)\neq (1,0))$ by using  the idea of cycle lifting.

For $F(x)=\b$ or $F(x)=x$, the dynamical system $(K,F)$ is trivial.

We distinguish two cases: $F$ is a translation or not.\\
\textbf{Case I}.  If $F(x)=x+\b$ with $\b\neq 0$, then  $F$ is conjugate to the translation $\hat{F}(x)=x+1$ by $h(x)=x/\b$.

We consider $\O$ as a single point cycle at level $0$, denoted by $(0)$. We say that the cycle $(0)$ at level $0$ grows if $\O/\pi \O$ consists of $p^{f-1}$ cycles of length $p$ under $F_1$. Then $(\O,F)$ is said to be of type $(1,e)$ at level $0$ if the cycle $(0)$ grows at level $0$, the $p^{f-1}$ lifts of $(0)$  with length $p$ at level $1$ splits $e-1$ times then all the descendants grow; and
again the lifts of the growing cycles split $e-1 $ times then the
descendants grow; and so on.

For $a\in K$, denote by $\overline{\B}(a,1)=\{x\in K, |x-a|_p\leq 1\}$ the ball centered at $a$ with radius $1$. We have the following theorem.

\begin{thm}For the translation $F(x)=x+1$ acting on $K$, each ball of radius $1$ is $F$-invariant.
For $a\in K$, each subsystem $(\overline{\B}(a,1),F)$ is conjugate to $(\O,F)$ by $h(x)=x-a$. The dynamical system $(\O,F)$ is of type $(1,e)$ at level $0$.
\end{thm}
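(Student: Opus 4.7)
The plan is to establish the three assertions in sequence; only the last one requires real work. For invariance, if $|x-a|_p \leq 1$, the ultrametric inequality gives $|F(x)-a|_p = |x-a+1|_p \leq \max(|x-a|_p,|1|_p) \leq 1$. For the conjugacy, $h(x)=x-a$ is a bijection $\overline{\B}(a,1) \to \O$, and the identity $h(F(x)) = x+1-a = (x-a)+1 = F(h(x))$ is immediate.

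The core assertion is that $(\O,F)$ is of type $(1,e)$ at level $0$. I would first handle the base step: the cycle $(0)$ grows at level $0$ because $F_1$ acts on the residue field $\O/\pi\O \cong \mathbb{F}_{p^f}$ as translation by $1$, and since this field has characteristic $p$, one has $F_1^{\circ p} = \mathrm{id}$ while $F_1^{\circ m}(x) \neq x$ for $1 \leq m < p$; hence $\O/\pi\O$ decomposes into exactly $p^{f-1}$ orbits of length $p$. For the inductive step the crucial simplification is that $F$ is affine, so for any cycle of length $p^j$ at any level $n \geq 1$ one has
$$a_n(x) = (F^{\circ p^j})'(x) = 1, \qquad b_n(x) = \frac{F^{\circ p^j}(x)-x}{\pi^n} = \frac{p^j}{\pi^n},$$
which gives $a_n \equiv 1 \pmod{\pi}$ identically, and $\val(b_n) = je - n$.

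From these formulas I would prove by induction on $j \geq 1$ that the cycles of length $p^j$ occupy exactly the consecutive levels $(j-1)e+1, (j-1)e+2, \ldots, je$. At each of the levels $(j-1)e+1, \ldots, je-1$ one has $\val(b_n) \geq 1$, so $b_n \equiv 0 \pmod \pi$ and the cycle splits (preserving length $p^j$); at level $je$ one has $\val(b_n) = 0$, so $b_n \not\equiv 0 \pmod \pi$ and the cycle grows, producing cycles of length $p^{j+1}$ at level $je+1$, which initiates the next inductive step. This is precisely the recipe defining type $(1,e)$ at level $0$. The only potential difficulty is bookkeeping the levels correctly; no non-trivial estimates are needed because $F^{\circ p^j}$ is itself a pure translation, so the computation of $a_n$ and $b_n$ is entirely elementary, and the degenerate unramified case $e=1$ is automatically absorbed ($e-1 = 0$ splittings between consecutive growths).
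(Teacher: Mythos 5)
Your proposal is correct, and it is the natural argument: the paper states this theorem without a written proof, and your computation of $a_n(x)=1$, $b_n(x)=p^j/\pi^n$ with $\val(b_n)=je-n$ is exactly the kind of direct affine calculation the paper itself carries out for the multiplication case in Section 6, combined with the grows/splits classification of Section 4. The base step in the residue field and the level bookkeeping (splits at levels $(j-1)e+1,\dots,je-1$, growth at level $je$) match the paper's definition of type $(1,e)$ at level $0$, including the degenerate case $e=1$.
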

\noindent\textbf{Case II}. If $F(x)=\a x+\b,~~\a\neq 1$, then $F$ is conjugate to the multiplication $\hat{F}(x)=\a x $ by $h(x)=x-(\b/(1-\a))$.
Thus we need only to study the multiplication dynamics $(K,F(x)=\a x)$.
It is easy to see that $0,\infty$ are  two fixed points of $F$.

Let $\U:=\{x\in \O:
|x|_p=1\} $ be the group of units in $\O$ and $\V:=\{x\in \U: \exists m \in \N, m\geq 1, x^m=1\}$ be the set of roots of unity in $\O$. The set $\V$ is a finite set of  $(p^f-1)p^s$ elements, with $p^s$ being the highest power of $p$ such that $K$ has a root of unity of order $p^s$. For the details, see the book of Robert \cite{Rob-GTM198}.
We will distinguish three cases:
\[
\rm{(A)} \  \a\not \in \U, \quad
\rm{(B)} \ \a\in  \V \setminus \{1\}, \quad \text{and}\quad
\rm{(C)} \ \a\in \U \setminus \V. \]

The following we will treat the three cases separately. The first two cases are easy.

Case (A) $\a\not \in \U$. If $|\a|_p<1$, then the system admits an attracting fixed point $0$ with the whole $K$ being the attracting basin, that is,
$$\lim_{n\rightarrow \infty } F^{\circ n}(x) = 0,~~~~~~~~~\forall x\in K .$$
 If $|\a|_p>1$, then the system admits a repelling fixed point $0$ and the whole $K$ except $0$ lies in  the attracting basin of $\infty$, that is,
$$\lim_{n\rightarrow \infty } F^{\circ n}(x) = \infty,~~~~~~~~~\forall x\in K \setminus \{0\}.$$

Case (B) $\a\in  \V \setminus \{1\}$. Let $\ell$ be the order of $\a$, i.e., the least integer such that $\a^{\ell}=1 $. It is easy to see that all points are in a periodic orbit with period $\ell$.

Case (C) $\a\in \U \setminus \V$. For each $n\in \Z$, the sphere $\pi^{-n}\U$  is $F$-invariant.  The dynamical system $(\pi^{-n}\U,F)$ is conjugate to the system $(\U,F)$ by $h(x)=\pi^{n}x$.
\begin{center}
\begin{picture}(100,90)(0,-65) \put(-5,0){$\pi^{-n}\U$}
\put(25,6){\vector(1,0){40}} \put(70,0){$\pi^{-n}\U$} \put(37,10){$\alpha x
$} \put(10,-10){\vector(0,-1){30}} \put(-10,-25){$\pi^{n} x$}
\put(79,-10){\vector(0,-1){30}} \put(82,-25){$\pi^{n} x $}
\put(8,-55){$\U$} \put(75,-55){$\U$}
\put(25,-52){\vector(1,0){40}} \put(37,-63){$\alpha x$}
\end{picture}
\end{center}

Now we are going to study the dynamical system $(\U,F)$.

\begin{thm}
Consider the system $(\U, F)$ where $F(x)=\a x$ with $\a\in \U \setminus \V$. Let $\ell$ be the order of $\alpha$ in $\mathbb{K}^{*}$. One can decompose $\mathbb{U}$ into
   $(p^f-1) p^{v_{\pi}(\a^\ell-1)\cdot f-f}/\ell$ clopen sets such that each
  clopen set is of type $
(\ell, \vec{E})$ at level $v_{\pi}(\a^\ell-1)$ where
$$\vec{E}=\left(\val(\frac{\a^{\ell p }-1}{\a^{\ell}-1}), \ \val(\frac{\a^{\ell p^2 }-1}{\a^{\ell p}-1}), \ \cdots, \ \val(\frac{\a^{\ell p^{N+1} }-1}{\a^{\ell p^N}-1}), \ e, \ e, \ \cdots\right).$$
Here $N$ is the largest integer such that $\val((\a^{\ell p^{N+1} }-1)/({\a^{\ell p^N}-1}))\neq  e$.

\end{thm}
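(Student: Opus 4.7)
The plan is to exploit the linearity of $F(x)=\alpha x$ to track the cycle structure level by level up to the first growing level $m:=\val(\alpha^{\ell}-1)$, and then to apply Proposition \ref{cor9} to each resulting growing $\ell$-cycle.

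First I observe that at level $1$ the map $F_{1}$ acts on $\O/\pi\O=\K$ as multiplication by $\bar{\alpha}$, which by definition of $\ell$ has order $\ell$ on $\K^{*}$. Hence the restriction to $\U/\pi\O=\K^{*}$ produces exactly $(p^{f}-1)/\ell$ cycles, all of length $\ell$. For any $\ell$-cycle $\sigma$ at any level $n$, the linearity of $F$ gives
\[
a_{n}(x) \;=\; (F^{\circ \ell})'(x) \;=\; \alpha^{\ell},\qquad b_{n}(x) \;=\; \frac{\alpha^{\ell}x-x}{\pi^{n}} \;=\; \frac{x(\alpha^{\ell}-1)}{\pi^{n}},
\]
so $\val(b_{n}(x))=m-n$ for every $x\in\mathbb{X}_{\sigma}\subset\U$. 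Therefore $a_{n}\equiv 1\pmod{\pi}$ for all $n$, while $b_{n}\equiv 0\pmod{\pi}$ exactly when $n<m$. Thus every $\ell$-cycle splits at each level $n$ with $1\le n<m$, each of its $p^{f}$ lifts is again an $\ell$-cycle to which the same formulas apply, and at level $m$ each surviving $\ell$-cycle grows.

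Since every splitting step multiplies the cycle count by $p^{f}$, after $m-1$ splittings the number of growing $\ell$-cycles at level $m$ equals
\[
\frac{p^{f}-1}{\ell}\cdot (p^{f})^{m-1}\;=\;\frac{(p^{f}-1)\,p^{mf-f}}{\ell},
\]
which matches the count in the statement. For each such growing $\ell$-cycle $\sigma$ one has $\hat{A}_{m}=m$ and $a_{m}=\alpha^{\ell}$, so Proposition \ref{cor9} produces a decomposition of $\mathbb{X}_{\sigma}$ into minimal subsystems of type $(\ell,\vec{E})$. To identify $E_{j}$ in the form asserted, I note that after the $j$-th growth the descendant cycles have length $\ell p^{j}$ and carry $a=\alpha^{\ell p^{j}}$; Proposition \ref{grow-2} then states that the number of subsequent splits is
\[
\val\!\left(1+\alpha^{\ell p^{j}}+\cdots+\alpha^{(p-1)\ell p^{j}}\right) \;=\; \val\!\left(\frac{\alpha^{\ell p^{j+1}}-1}{\alpha^{\ell p^{j}}-1}\right) \;=\; E_{j},
\]
and Lemma \ref{order} forces $E_{j}=e$ for all $j$ large enough, which produces the integer $N$ in the statement.

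The main obstacle is the uniformity during the initial $m-1$ splittings: one must check that every lift at every intermediate level is again an $\ell$-cycle on which the explicit formulas for $a_{n}$ and $b_{n}$ still hold. Here this is automatic from the linearity of $F$, because $a_{n}$ is the constant $\alpha^{\ell}$ and $b_{n}(x)/x$ depends neither on the choice of cycle nor on the level; this is precisely the structural simplification that makes the affine case tractable without invoking the full cycle-lifting machinery of Section \ref{preliminary} beyond Propositions \ref{grow-2} and \ref{cor9}.
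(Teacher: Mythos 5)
Your set-up (the exact formulas $a_n(x)=\alpha^{\ell}$, $b_n(x)=x(\alpha^{\ell}-1)/\pi^{n}$ on $\ell$-cycles, the splitting up to level $m:=\val(\alpha^{\ell}-1)$, and the count $(p^f-1)p^{mf-f}/\ell$ of growing $\ell$-cycles at level $m$) coincides with the paper's argument. The gap is in the second half, where you read off $\vec{E}$ from Proposition \ref{grow-2}, Lemma \ref{order} and Proposition \ref{cor9}: each of these is invoked outside its hypotheses, and the hypotheses genuinely fail in cases covered by the theorem. Proposition \ref{cor9} and Lemma \ref{order} assume the growing cycle sits at a level $n\ge e+1$, whereas your first growing level is $m=\val(\alpha^{\ell}-1)$, which can equal $1$. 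More seriously, Proposition \ref{grow-2} requires $n>\val(1+a_n+\cdots+a_n^{p-1})$ at the growing level $n$. At the $j$-th growth the level is $n_j=\val(\alpha^{\ell p^{j}}-1)=:\gamma_j$, and whenever $\gamma_j\le e/(p-1)$ one has $\val(1+\alpha^{\ell p^{j}}+\cdots+\alpha^{(p-1)\ell p^{j}})\ge (p-1)\gamma_j\ge \gamma_j=n_j$, so the hypothesis fails at every such step (for instance $K$ ramified and $\val(\alpha^{\ell}-1)=1$). Moreover, in the borderline case $\gamma_j(p-1)=e$ cancellation can make $\val\bigl((\alpha^{\ell p^{j+1}}-1)/(\alpha^{\ell p^{j}}-1)\bigr)$ strictly larger than $e$; Proposition \ref{cor9} deliberately leaves that case open (``split at least $e-1$ times''), so the general machinery cannot deliver the exact entries of $\vec{E}$ asserted here.

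The repair is the paper's route, and it is already implicit in your remark on linearity: do not route through the general propositions, but repeat your explicit computation for the descendants. For an $\ell p^{j}$-cycle at any level $n$ one has exactly $a_n(x)=\alpha^{\ell p^{j}}$ and $b_n(x)=x(\alpha^{\ell p^{j}}-1)/\pi^{n}$, so such a cycle splits precisely while $n<\val(\alpha^{\ell p^{j}}-1)$ and grows at $n=\val(\alpha^{\ell p^{j}}-1)$; hence $E_j=\val(\alpha^{\ell p^{j+1}}-1)-\val(\alpha^{\ell p^{j}}-1)=\val\bigl((\alpha^{\ell p^{j+1}}-1)/(\alpha^{\ell p^{j}}-1)\bigr)$, with no hypotheses to check. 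Finally, ``$E_j=e$ for all $j>N$'' is not a formal consequence of Lemma \ref{order}: you must first argue that $\gamma_j$ eventually exceeds $e/(p-1)$ (it is strictly increasing, indeed $\gamma_j\to\infty$ because $\alpha\notin\V$), after which the expansion $(\alpha^{\ell p^{j+1}}-1)/(\alpha^{\ell p^{j}}-1)=p+\binom{p}{2}\pi^{\gamma_j}\delta_j+\cdots+\pi^{(p-1)\gamma_j}\delta_j^{p-1}$ has valuation exactly $e$; the paper supplies precisely this step using $\val(\alpha^{\ell i p^{k}}-1)\to\infty$.
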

\begin{proof}
Notice that  $\ell $ is  the order of $\a$ in $\K^* $. Thus $\a^{\ell} \equiv 1 \ ({\rm mod} \ \pi)$.
  We can check that there are  $(p^f-1)/\ell $ cycles of length $\ell
  $ at level $1$.
Now we consider the $\ell$-cycles at level $1$.

For each $x_0\in \U$,
  \begin{align*}
a_1(x_0)&=(F^{\circ \ell})'(x_0)= \a^\ell, \\
b_1(x_0)&=\frac{F^{\circ \ell}(x_0)-x_0}{\pi}=\frac{(\a^\ell-1)x_0}{\pi}.
  \end{align*}
Thus $a_1(x_0)\equiv 1 \ ({\rm mod} \ \pi)$ and
  $\val(b_{1}(x_0))=\val(\a^\ell-1)-1.$
By induction, one can check that each $\ell$-cycle at level $1$ splits $\val(\a^\ell-1)-1 $ times and then
all its lifts grow.

Let $m=\val(\a^\ell-1)+1 $. Consider the $\ell p$-cycles at level
$m$.
 For each $x_0\in \U$,
  \begin{align*}
a_m(x_0)&=(F^{\circ \ell p})'(x_0)= \a^{\ell p}, \\
b_m(x_0)&=\frac{F^{\circ \ell p}(x_0)-x_0}{\pi^m}=\frac{(\a^{\ell
p}-1)x_0}{\pi^m}.
  \end{align*}
Then we have
\[
 \val(b_m(x_0))=\val({\a^{\ell p}-1})-m=\val(\frac{\a^{\ell p }-1}{\a^{\ell}-1})-1.
\]
Hence each $\ell p$-cycle splits  $\val(\frac{\a^{\ell p }-1}{\a^{\ell}-1})-1$ times and then all its lifts
grow.

Now let $q=\val(\a^{\ell p}-1)+1 $. Consider the $\ell p^2$-cycles at level
$q$. By the same calculations, we have for each $x_0$,
$$a_q(x_0)= \a^{\ell p^2}\equiv 1 \ ({\rm mod} \ \pi)$$ and
\[
 \val(b_q(x_0))=\val({\a^{\ell p^2}-1})-q=\val(\frac{\a^{\ell p^2 }-1}{\a^{\ell p}-1})-1.
\]
Hence each $\ell p^2$-cycle splits $\val(\frac{\a^{\ell p^2 }-1}{\a^{\ell p}-1})-1$ times and then all its lifts
grow.

Go on this process, we can show that each $\ell p^k$-cycle  at level $\val(\a^{\ell p^{k-1}}-1)+1 $ splits $\val(\frac{\a^{\ell p^k }-1}{\a^{\ell p^{k-1}}-1})-1 $ times and then all its lifts
grow.

Since $\val(\a^{\ell i p^k} -1) \to \infty$ when $k\to \infty$ for $1\leq i \leq p-1$ (see \cite{sch}, p.100), and $\val(p)=e$, we can find
an integer $N$ such that for all $k> N$, we have
\begin{align*}
\val(\frac{\a^{\ell p^{k+1} }-1}{\a^{\ell p^k}-1})-1&=\val(1+\a^{\ell p^k}+\cdots+\a^{(p-1)\ell p^{k}})-1\\
&=\val((1-1)+(\a^{\ell p^k}-1)+\cdots+(\a^{(p-1)\ell p^{k}}-1)+p)-1\\
&=\val(p)-1=e-1.
\end{align*}
Therefore, we can conclude that each $\ell$-cycle at level $\val(\a^\ell-1)$ is of type $
(\ell, \vec{E})$ with

$$\vec{E}=\left(\val(\frac{\a^{\ell p }-1}{\a^{\ell}-1}), \ \val(\frac{\a^{\ell p^2 }-1}{\a^{\ell p}-1}), \ \cdots, \ \val(\frac{\a^{\ell p^{N+1} }-1}{\a^{\ell p^N}-1}), \ e, \ e, \ \cdots\right).$$

On the other hand, we have that  the number of $\ell$-cycle at level $\val(\a^\ell-1)$ is
$$\frac{p^f-1}{\ell} \cdot (p^{f})^{\val(\a^\ell-1)-1},$$ since one $\ell$-cycle splits into $p^f$ number of $\ell$-cycles. This completes the proof.

\end{proof}

\section*{Acknowledgement}
The authors thank Professor Ai-Hua Fan for his valuable remarks. Shi-Lei Fan was partially supported by NSF of China (Grant No. 11231009) and CNRS program (PICS No.5727), Lingmin Liao was partially supported by 12R03191A - MUTADIS and the project PHC Orchid of MAE and MESR of France.

%
%
%
%
%

\end{document}